\setlist{nolistsep}
\newtheorem{thm}{Theorem}[section]
\newtheorem*{thm*}{Main Theorem}
\newtheorem*{thm**}{Theorem}
\newtheorem{cor}[thm]{Corollary}
\newtheorem{lem}[thm]{Lemma}
\theoremstyle{definition}
\newtheorem{defn}[thm]{Definition}
\theoremstyle{definition}
\theoremstyle{definition}
\theoremstyle{definition}
\theoremstyle{definition}
\newtheorem{examples}[thm]{Examples}
\theoremstyle{definition}
\theoremstyle{definition}
\numberwithin{thm}{subsection}
\newcommand{\R}{\ensuremath{\mathbb{R}}}
\newcommand{\N}{\ensuremath{\mathbb{N}}} 
\newcommand{\F}{\ensuremath{\mathbb{F}}}
\def\p{\partial}
\def\i{\infty}
\def\supp{\it{supp}}
\def\frak{\mathfrak}
\def\diam{\emph{diam}} 
\def\fr{\p}
\def\image{\emph{im}}
\def\cal{\mathcal}
\def\a{\alpha}
\def\g{\gamma} 
\def\d{\delta}
\def\D{\Delta} 
\def\e{\epsilon}
\def\H{\mathcal{H}}
\def\F{\mathcal{F}}
\def\L{\mathcal{L}}
\def\cN{\cal{N}}
\def\lip{\mathrm{Lip}}
\def\Span{\mathcal{S}}
\def\Gr{\mathrm{Gr}}
\newcommand\footnoteref[1]{\protected@xdef\@thefnmark{\ref{#1}}\@footnotemark}
\def\XXint#1#2#3{{\setbox0=\hbox{$#1{#2#3}{\int}$}
\vcenter{\hbox{$#2#3$}}\kern-.5\wd0}}
\renewcommand{\theenumi}{(\alph{enumi})}
\renewcommand{\p@enumii}{\theenumi}
\begin{document} 
	
\author[J. Harrison \& H. Pugh]{J. Harrison \\Department of Mathematics \\University of California, Berkeley \\ H. Pugh\\ Mathematics Department \\ Stony Brook University} 
\title{General Methods of Elliptic Minimization}
\begin{abstract}
	We provide general methods in the calculus of variations for the anisotropic Plateau problem in arbitrary dimension and codimension. Given a collection of competing ``surfaces,'' which span a given ``bounding set'' in an ambient metric space, we produce one minimizing an elliptic area functional. The collection of competing surfaces is assumed to satisfy a set of geometrically-defined axioms. These axioms hold for collections defined using any combination of homological, cohomological or linking number spanning conditions. A variety of minimization problems can be solved, including sliding boundaries.
\end{abstract}
\maketitle
	
\section{Introduction}
	\label{sec:introduction}
	Plateau's problem asks if there exists a surface of least area among those with a given boundary. It was named after the French physicist Joseph Plateau, who in the 19\textsuperscript{th} century experimented with soap films and formulated laws that describe their structure. There is no single theorem or conjecture called Plateau's problem; it is rather a general framework which has many precise formulations. Douglas and Rad\'{o} \cite{douglas,rado} independently solved the first such formulation of Plateau's problem by finding an area minimizer among immersed parametrized disks with a prescribed boundary in \( \R^n \). Three seminal papers appearing in 1960 \cite{reifenberg,federerfleming,degiorgi} employed different definitions of ``surface'' and ``boundary'' and solved distinct versions of the problem. The techniques developed in these papers gave birth to the modern field of geometric measure theory.
	
	We will briefly mention the problems solved in \cite{reifenberg} and \cite{federerfleming}, leaving proper definitions and details to the original sources. An \( m \)-rectifiable set equipped with a pointwise orientation and integer multiplicity can be integrated against differential forms. Such objects are called ``rectifiable currents'' and possess mass (\( m \)-dimensional Hausdorff measure weighted by the multiplicity) and a boundary operator (the dual to exterior derivative.) If the boundary of a rectifiable current is also rectifiable, it is called an ``integral current.'' Federer and Fleming \cite{federerfleming} used these integral currents to define their competing surfaces and used mass to define ``area.'' On the other hand, Reifenberg \cite{reifenberg} used compact sets for surfaces and Hausdorff measure to define area. There is no boundary operator defined for sets, so instead he turned to \v{C}ech homology to define a collection of competing ``surfaces'' with a given boundary. Roughly speaking, given a boundary set \( A \) and a set \( L \) of \v{C}ech cycles in \( A \), a set \( X\supset A \) is a competing surface if each cycle in \( L \) bounds in \( X \).
	
	There are advantages and disadvantages to using either sets or currents. Each approach has its own applications and is suitable for different problems. Sets tend to be more difficult to work with than currents because of the lack of a boundary operator and the fact that unlike mass, Hausdorff measure is not lower-semicontinuous in any useful topology. A substantive difference between the two is that integral currents possess an orientation and sets do not. In practice, two currents with opposite orientations cancel when brought together, while sets do not. Sets are better models for physical soap films, since if two soap films touch, they merge rather than cancel.

	Plateau's problem requires minimization of an area functional \( X \mapsto \mathit{A}(X) \) where \( X \) is a competing surface and \( \mathit{A} \) refers to either mass or \( m \)-dimensional Hausdorff measure. The problem can be generalized to a heterogeneous problem by allowing the ambient density of \( \mathit{A} \) to vary pointwise by a function \( f \). In this case, one would minimize the functional \( X \mapsto \int_X f(p)\mathit{dA} \). The heterogeneous problem itself is a special case of an anisotropic minimization problem in which the density function \( f \) can depend non-trivially on \( m \)-dimensional tangent directions. In this case, the functional would be \( X\mapsto \int_X f(p,T_pX)\mathit{dA} \). 
	
	For example, consider the cost of building roads between several towns. If the land is flat and homogeneous but with a varying cost of acquisition, then the cost minimization problem is a heterogenous but isotropic minimization problem. However, if the land is hilly with variable topography, then cost becomes an anisotropic problem.
 	 
	Almgren \cite{almgrenannals} worked on the anisotropic minimization problem and defined a necessary ellipticity condition on the area functional. Roughly speaking, an anisotropic area functional is elliptic if an \( m \)-disk centered at any given point can be made small enough so that it very nearly minimizes the area functional among surfaces with the same \( (m-1) \)-sphere boundary. This ellipticity condition as defined in \cite{almgrenannals} is analogous to Morrey's quasiconvexity used in parametric variational problems \cite{morreyconvex}. Federer \cite{federer} used a parametric variant of Almgren's elliptic integrands to obtain an anisotropic version of \cite{federerfleming} for integral currents.

	In this paper we establish the existence of an \( m \)-dimensional surface in an ambient metric space which minimizes an elliptic area functional for collections of sets satisfying axiomatic spanning conditions, including the collections considered in \cite{almgrenannals}. This solves a problem of geometric measure theory from the 1960's (e.g., see \cite{almgrenannals},\cite{almgren},) namely to provide an elliptic version of the ``size\footnote{``Size'' in this context refers to Hausdorff measure. Physically realistic models of soap films use size instead of mass to define area.} minimization problem'' as in Reifenberg \cite{reifenberg}. Roughly speaking, given a bounding set \( A \) and a collection of \( m \)-rectifiable sets \( X \) which ``span'' \( A \) with respect to a geometrically-defined set of axioms \S\ref{axioms}, we prove there exists an element in the collection with minimum \( m \)-dimensional Hausdorff measure, weighted by an anisotropic density function. In \S\ref{sub:axiomatic_spanning_conditions} we describe a variety of topologically-defined collections which satisfy the axioms. Our methods build upon the isotropic results in \cite{lipschitzarxiv,lipschitz}.

	\subsection{Recent history and current developments} 
		\label{sec:recent_hisoty_and_current_developments}
		 
		In \cite{hpplateau} we used linking numbers to specify spanning conditions: If \( M \) is an oriented \( (n-2) \)-dimensional connected submanifold of \( \R^n \), we say a set \( X\subset \R^n \) \emph{\textbf{spans}} \( M \) if every circle embedded in the complement of \( M \) which has linking number one with \( M \) has non-trivial intersection with \( X \). This definition can be extended to arbitrary codimension by replacing linking circles with spheres and to the case that \( M \) is not connected by specifying linking numbers with each component. We proved the following result, relying on \cite{almgren} for regularity: 
		
	 	\textbf{Theorem} \cite{hpplateau}
		\mbox{}\\
			 Let \( M \) be an oriented, compact \( (n-2) \)-dimensional submanifold of \( \R^n \) and \( \Span \) the collection of compact sets spanning \( M \). There exists an \( X_0 \) in \( \Span \) with smallest size. Any such \( X_0 \) contains a ``core'' \( X_0^*\in \Span \) with the following properties: It is a subset of the convex hull of \( M \) and is a.e. (in the sense of \( (n-1) \)-dimensional Hausdorff measure) a real analytic \( (n-1) \)-dimensional minimal submanifold. 
	 
		De Lellis, Ghiraldin and Maggi \cite{delellisandmaggi} built upon our linking number spanning condition and extracted more general axiomatic spanning conditions, a possibility first envisioned by David. Their beautiful work gave a new proof to the main result of \cite{hpplateau} and, simultaneously, a new approach to the ``sliding boundary'' problem also posed by David \cite{david2014} (see \S\ref{examples:span} for further discussion.) De Philippis, de Rosa and Ghiraldin in \cite{ghiraldin} extended their paper to higher codimension, replacing links by simple closed curves with links by spheres.

		In \cite{lipschitzarxiv,lipschitz}, we extended \cite{hpplateau} to higher codimension using a spanning condition defined using cohomology. We also minimized Hausdorff measure weighted by an isotropic H\"older density function. By Alexander duality, taking geometric representatives for homology classes, this cohomological spanning condition is equivalent to the above linking condition, but in which the linking spheres are replaced with surfaces with possibly higher genus and conical singularities.

		The isotropic density function of \cite{lipschitzarxiv,lipschitz} was replaced by an anisotropic density in the current paper. At essentially the same time as this paper was announced, de Lellis, de Rosa and Ghiraldin posted \cite{DeLDeRGhi16} for codimension one. Our two approaches use different axiomatic spanning conditions. Our axioms \S\ref{axioms} are, roughly speaking, that our collections of sets are closed under the action of diffeomorphisms keeping the bounding set \( A \) fixed and Hausdorff limits. We note that all collections using homological, cohomological, or linking spanning conditions satisfy these conditions. The axioms of \cite{DeLDeRGhi16}, similar to those in its predecessor \cite{delellisandmaggi}, use so-called ``good classes'' and ``deformation classes.'' Roughly speaking, ``good classes'' use cup competitors arising from Caccioppoli theory and ``deformation classes'' have to do with behavior under Lipschitz deformations. (We refer to \cite{DeLDeRGhi16} for the full definitions.) Neither cup nor deformed competitors are assumed to be included in the original collection, but ``are approximable in energy'' by elements of the collection.
		
	\subsection{Advances in this paper}\label{advances}
		
		\subsubsection{\textbf{Ambient spaces}}
			\label{ambient}
			\begin{figure}[h]
				\centering
				\includegraphics[width=.6\textwidth]{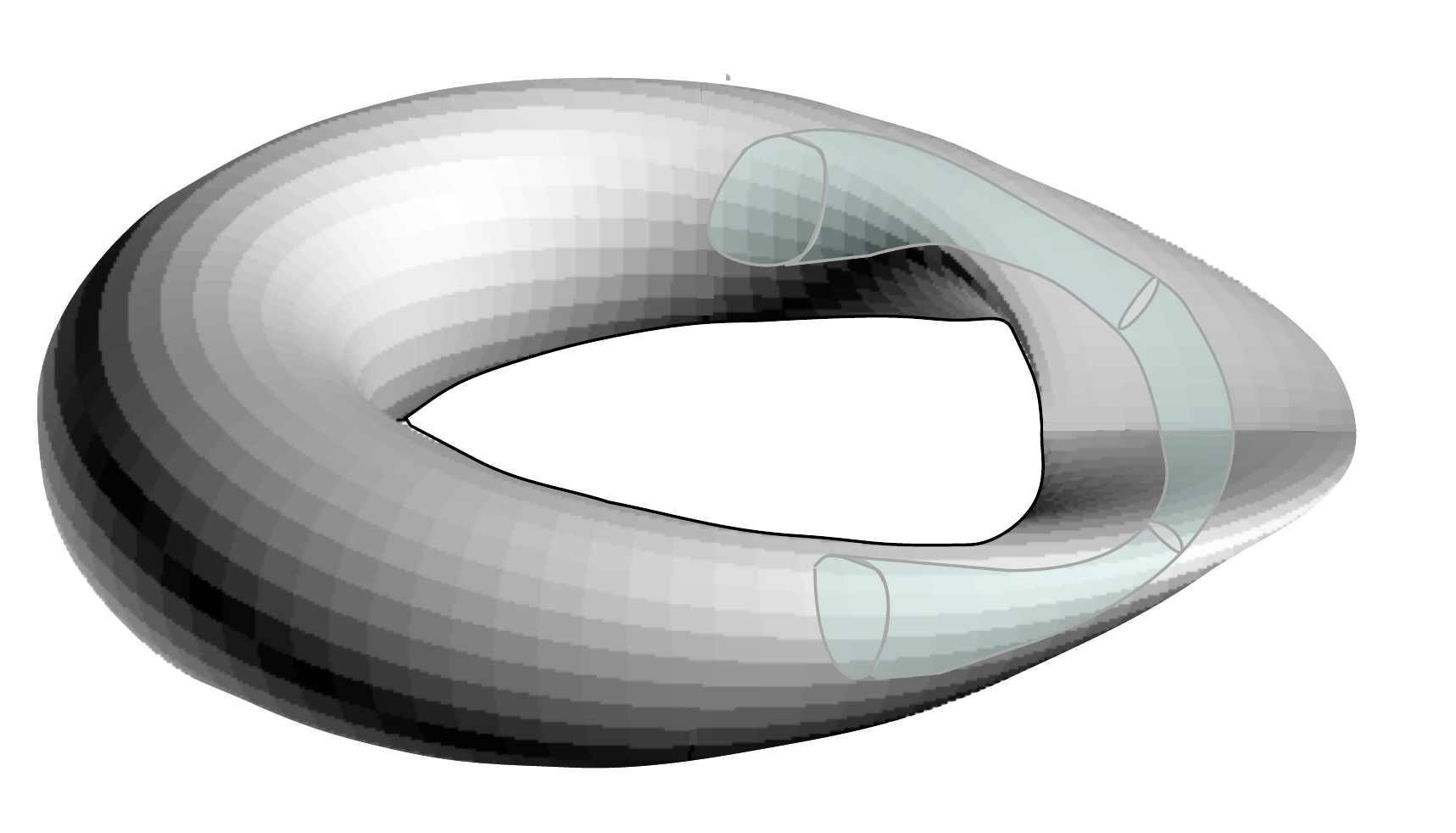}
				\caption{This pinched solid torus is an ambient space for anisotropic minimization problems. The bounding set \( A \) is the union of two (dashed) circles in the interior of the torus and the interior pinched (dashed) cylinder depicts a competitor spanning \( A \).}
				\label{fig:pinched2}
			\end{figure}
			
			In this paper we permit the ambient space in which the minimization occurs to be a certain type of metric space which can be isometrically embedded in \( \R^n \) as a Lipschitz retraction of some neighborhood of itself. See Definition \ref{def:LNR}. Examples include Riemannian manifolds with boundary and/or conical singularities. (See Figure \ref{sub:axiomatic_spanning_conditions}.)
		
			Whenever one wishes to extend a particular result in geometric measure theory from Euclidean space to an ambient Riemannian manifold, one is faced with the choice of either working in charts, or embedding the manifold in Euclidean space and proving that the various constructions used can be deformed back onto the manifold. Indeed, this second approach is usually much simpler, and yields further generalization to spaces more general than manifolds, namely Lipschitz neighborhood retracts. We have, for the most part, chosen this second approach. However, the full category of Lipschitz neighborhood retracts seems slightly out of reach. We make use of one construction in particular, namely Lemma \ref{cor:deformation}, in which it is vital to assume slightly more about the ambient space, namely that the Lipschitz retraction can be ``localized'' (Definition \ref{def:LNR}.) Nevertheless, this slightly restricted class of Lipschitz neighborhood retracts contains all the interesting examples we can think of, including Riemannian manifolds with boundary and certain singularities.

		\subsubsection{\textbf{Bounding sets}}
			\label{bounding}
			In our approach to the size minimization problem, we begin with a fixed ``bounding set'' \( A \). This is a compact set that all competitors \( X \) are assumed to contain\footnote{Or alternatively, all competitors \( X \) are assumed to be relatively closed subsets of the complement of \( A \). This is a stylistic choice; there are pros and cons to both approaches, but they are mathematically equivalent.}. The role that \( A \) plays in Plateau problems mimics that of a boundary condition in PDE's, but we call \( A \) a ``bounding set'' since it might look nothing like the boundary of an \( m \)-dimensional set in some simple examples (see Figure \ref{fig:sliding2}.) We permit \( A \) to be \emph{any} compact set, including the empty set and sets with dimension \( n \) (we minimize the elliptic functional over the sets \( X\setminus A \).) 
		\subsubsection{\textbf{Geometrically defined axiomatic spanning conditions}}
			\label{axioms} 
			 The problem of finding axiomatic conditions on collections of sets sufficient to solve minimization problems was posed in \cite{shallwe}. The axioms presented in \S\ref{sub:axiomatic_spanning_conditions} assume that our collections of sets are closed under Lipschitz deformations and Hausdorff limits. These conditions are all met by the algebraic spanning conditions in \ref{spanning}. If the ambient space is a Riemannian manifold, then the Lipschitz deformations can be replaced by diffeomorphisms isotopic to the identity (see Definition \ref{def:metricaxioms}.)
		\subsubsection{\textbf{Algebraic spanning conditions}}
			\label{spanning}
			Currents possess a boundary operator, and for minimization problems in which the ``surfaces'' are currents, this boundary operator can be used to specify a spanning condition. That is, a current \( S \) is said to span a current \( T \) if the boundary of \( S \) is \( T \). However, there is no boundary operator for sets and it takes more work to specify spanning conditions for minimization problems involving sets. We are aware of two closely related types of algebraic spanning conditions which satisfy our axioms. The first is defined using \v{C}ech theory, either homological, cohomological or a combination of the two (see Definition \ref{def:algebraicspanning}.). The second uses linking numbers as defined in \S\ref{sec:recent_hisoty_and_current_developments}, and is homotopical in nature (see Definition \ref{linking}.) The key property needed for both types is continuity under either weak or Hausdorff limits. That is, if \( \{X_i\} \) is a minimizing sequence of surfaces satisfying a spanning condition, and \( X_i \) converges to \( X_0 \), then \( X_0 \) should also satisfy the spanning condition. The \v{C}ech theoretic spanning conditions satisfy this property due to the unique continuity property of \v{C}ech theory. The linking number spanning conditions satisfy the property due to the fact that null intersection of compact sets is an open condition. See Definition \ref{def:algebraicspanning} for more details and Figure \ref{fig:BlueCubeBall} for an illustrative example.

	\subsection{Methods}
	 	\label{sec:methods}
		
		Our methods are those of classical geometric measure theory, drawing tools from Besicovitch \cite{besicovitch, besicovitchI,besicovitchII}, Reifenberg \cite{reifenberg}, Federer and Fleming \cite{federerfleming}, \cite{fleming}, and our previous work \cite{hpplateau,lipschitzarxiv,lipschitz}. We do not use quasiminimal sets, varifolds or currents at any stage in our proof, nor do we reference any results which require their use. Our proof of rectifiability is not based on density and Preiss's theorem, as was our isotropic result \cite{lipschitzarxiv,lipschitz}, but rather Federer-Fleming and Besicovitch-Federer projections.
			
		\par{\textbf{Reifenberg regular sequences}} Reifenberg did not use quasiminimal sets and thus did not encounter their problems, e.g., \cite{almgrenannals}\footnote{The authors believe that the existence proof in \cite{almgrenannals} contains a flaw. If there is control over the Hausdorff measures of deformations of the elements of a minimizing sequence \( \{X_k\}_{k\in \N} \), and this control is uniform across all scales and all \( k \), then the job of analyzing the limit set becomes markedly easier. This condition is called ``\((M,0,\d)\)-minimizing'' in \cite{almgren} and ``uniformly quasiminimal'' by others. Such sequences have nice properties; in particular, their limit sets are \( m \)-rectifiable with good bounds on density ratios. However, despite efforts by experts in the field there is as of yet no known method to convert an arbitrary minimizing sequence \( \{X_k\} \) into a uniformly quasiminimal one. There is a non-trivial gap in \cite{almgrenannals} where Almgren assumed that a minimizing sequence for the elliptic integrand is uniformly quasiminimal. (See the last paragraph of 2.9(b2) which is needed for the main existence theorem. In this section he is working with a compact rectifiable subset \( S \) and assumes it is quasiminimal immediately before the conclusion of 2.9(b2) which is the isoperimetric inequality. The quasiminimal constant assumed here must be uniform across all scales and all \( S \) as can be seen in the way he applies the isoperimetric inequality. Interested readers could start with 3.4 where he introduces a minimizing sequence for the first time. His proof of lower density bounds uses 2.9(b2), but all he has is a minimizing sequence at this point and he cannot apply 2.9(b2). Indeed, it is not hard to come up with minimizing sequences that are not uniformly quasiminimal.) An indication that Almgren may have been aware of this problem is found in his last major paper on the anisotropic Plateau’s problem \cite{almgren}, Almgren assumed that his competitors were a priori quasiminimal, thus filling the gap, but in so doing gave up a more general existence theorem \cite{morgan}. His solution depends on the quasiminimal constant, and compactness fails if this constant is permitted to vary. In his review article \cite{questionsandanswers} he took appropriate credit for his important definition of elliptic integrands and his proof of regularity for minimizing solutions in \cite{almgrenannals} and \cite{almgren}, but he did not claim to have proved an existence theorem.}. The authors found a key definition buried in a proof of \cite{reifenberg} and overlooked until now. ``Reifenberg regular sequences'' are sequences \( \{X_k\}_{k\in \N} \) in which \( X_k \) has a uniform lower bound on density ratios, down to a scale that decreases as \( k \) increases (see Definition \ref{def:rregular}). Given a minimizing convergent sequence, it is not too hard to produce a Reifenberg regular subsequence. Limits of these sequences have nice properties (see \cite{lipschitz} \S4.3,) most importantly the possession of a uniform lower density bound. The main result of the current paper boils down to showing that two key constructions of Fleming (see Lemma \ref{lem:upperdensity} and \cite{fleming} 8.2,) and Almgren (see Theorem \ref{thm:rectifiable} and \cite{almgrenannals} 3.2(c)) can be applied to Reifenberg regular minimizing sequences.

	\subsection{Acknowledgements}
		\label{sub:remarks}
		
		We wish to thank the reviewer for his insights and suggestions for improving our paper and Francesco Maggi for his helpful comments on the introduction.

\section*{Notation}
	\label{sec:notation}
	Notation and terminology follow \cite{mattila} for the most part. If \( X\subset \R^n \),
	\begin{itemize}
		\item \( \bar{X} \) is the closure of \( X \);
		\item \( \mathring{X} \) is the interior of \( X \);
		\item \( X^c \) is the complement of \( X \);
		\item \( X^* \) is the core of \( X \);
		\item \( \cN(X,\e) \) is the open epsilon neighborhood of \( X \);
		\item \( B(X,\e) \) is the closed epsilon neighborhood of \( X \);
		\item \( d_H(\cdot,\cdot) \) is the Hausdorff distance;
		\item \( \H^m(X) \) is the \( m \)-dimensional Hausdorff measure of \( X \);
		\item \( X(p,r) = X \cap B(p,r) \);
		\item \( x(p,r) = X \cap \fr\,B(p,r) \);
		\item \( C_p(X) \) is the (inward) cone over \( X \) with basepoint \( p \);
		\item \( \a_m \) is the Lebesgue measure of the unit \( m \)-ball in \( \R^m \);
		\item \( \Gr(m,n) \) is the Grassmannian of un-oriented \( m \)-planes through the origin in \( \R^n \).
	\end{itemize}

\section{Definitions and Main Result}
	\label{sec:mainresults}
	\begin{defn}
		\label{def:LNR}
		A metric space \( C \) is a \emph{\textbf{Lipschitz neighborhood retract}} if there exists an isometric embedding \( C\hookrightarrow \R^n \) for some \( n>0 \), together with a neighborhood \( U\subset \R^n \) of \( C \) and a Lipschitz retraction \( \pi:U\to C \) (to simplify notation, we identify \( C \) with its image under the embedding.) We say a Lipschitz neighborhood retract is \emph{\textbf{localizable}} if there exists an embedding as above, such that for every \( p\in C \) there exist \( \kappa_p<\i \) and \( \xi_p>0 \) such that if \( 0<r<\xi_p \) then there exists a Lipschitz retraction \( \pi_{p,r}: C\cup B(p,r)\to C \) with \( \pi_{p,r}(B(p,r))=C(p,r) \) and with Lipschitz constant \( \le \kappa_p \). We call \( \xi_p \) the \emph{\textbf{retraction radius of \( C \) at \( p \)}}. If \( C \) is compact, the condition of being localizable implies that \( C \) is a priori a Lipschitz neighborhood retract. Localizability also implies local contractibility. If \( \kappa:=\sup_{p \in C} \{\kappa_p\}<\i \), then we say the localizable Lipschitz neighborhood retract is \emph{\textbf{uniform}}.
	\end{defn}
	
	For example, a Riemannian manifold is a uniform localizable Lipschitz neighborhood retract\footnote{More precisely, given an isometric embedding of Riemannian manifolds \( M\hookrightarrow \R^n \), we consider \( M \) with the pullback metric space structure induced by the embedding, which for our purposes is equivalent to the usual length metric space structure, since the corresponding Hausdorff measures will be identical.}.

	Let \( C \) be a metric space and suppose for a moment that we have a fixed isometric embedding \( C\subset \R^n \). For \( 1\leq m\leq n \), let \( T^mC \) denote the subbundle of the restriction to \( C \) of the unoriented Grassmannian bundle \( \R^n\times \Gr(m,n)\to \R^n \) consisting of pairs \( (p,E)\in C\times \Gr(m,n) \) such that \( E \) is the unique approximate tangent space at \( p \) for some \( \H^m \) measurable \( m \)-rectifiable subset \( X \) of \( C \) with \( \H^m(X)<\i \). Let \( T_p^m C \) denote the fiber of \( T^m C \) above \( p \).
		
	Let \( 0<a\leq b<\i \) and suppose \( f:T^mC\to [a,b] \) is measurable (for the Borel \( \sigma \)-algebra on \( T^mC \).) For an \( \H^m \) measurable \( m \)-rectifiable set \( X\subset C \), define \[ \F^m(X)= \int_X f(q,T_q X) d\H^m, \] where \( T_q X \) denotes the unique tangent \( m \)-plane to \( X \) at \( q \).

	Note that \( \F^m \) is defined independently of the the isometric embedding of \( C \) into Euclidean space. 
	\subsection{Ellipticity}
		\label{sub:ellipticity}
		\begin{defn}
			\label{def:elliptic}
			Let \( C \) be a uniform localizable Lipschitz neighborhood retract. We say \( \F^m \) is \emph{\textbf{elliptic}} if there exists an embedding of \( C \) into \( \R^n \) as a uniform localizable Lipschitz neighborhood retract, such that for every \( \H^m \) measurable \( m \)-rectifiable subset \( X \) of \( C \) with \( \H^m(X)<\i \), the following condition is satisfied for \( \H^m \) almost every \( p\in X \) such that \( X \) has a unique tangent \( m \)-plane \( E \) at \( p \): If \( \e>0 \), there exists \( s>0 \) such that if \( 0<r<s \), then
			\begin{align}
				\label{eq:elliptic}
				(f(p,E)-\e )H^m(E(p,r)) \le \F^m(Z\cap C) + b \H^m (Z\setminus C),
			\end{align}
			for every \( m \)-rectifiable closed set \( Z\subset B(p,r) \) such that
			\begin{enumerate}
				\item \( Z\cap \fr B(p,r) = E\cap \fr B(p,r) \); and
				\item\label{elliptic:3} There is no retraction from \( Z \) onto \( E\cap \fr B(p,r) \).
				\end{enumerate}	
			\end{defn}
	
			This definition captures Almgren's elliptic functionals (\cite{almgrenannals} 1.2) and generalizes them to a broader class of domains. In particular, \( C \) may be a region in \( \R^n \) with manifold boundary, or a manifold with singularities (see Figure \ref{fig:pinched2}.) See \S\ref{moreellipticty} for more on the ellipticity condition. If \( C \) is a Riemannian manifold, and the ellipticity condition holds for a particular embedding (of Riemannian manifolds) into \( \R^n \), then it will hold for all such embeddings. 
	
	\subsection{Spanning conditions}
		\label{sub:axiomatic_spanning_conditions}
		
		\begin{defn}
			Let \( C \) be a metric space, \( m\in \N \) and \( A\subset C \) be closed (possibly empty.) If \( X\subset C \), let \( X^* \) denote the subset of \( X \) consisting of points \( p\in X \) such that \( \H^m(X(p,r))>0 \) for all \( r>0 \). We say that \( X \) is \emph{\textbf{reduced}} if \( X^*=X \). If \( X\supset A \), let \( X^\dagger \) denote the set \( (X\setminus A)^*\cup A \). We say that \( X\supset A \) is a \emph\textbf{{surface}} if \( X \) is closed and \( X\setminus A \) is \( m \)-rectifiable, reduced, and \( \H^m(X\setminus A)<\i \). 
		\end{defn}
		
		\begin{defn}[\textbf{Axiomatic Spanning Conditions}]
			\label{def:metricaxioms}
			Let \( \Span(C,A) \) denote a collection of surfaces. We say \( \Span(C,A) \) is a \emph{\textbf{spanning collection}} if the following axioms hold:
			\begin{enumerate} 
				\item\label{lip} If \( g:C\to C \) is a Lipschitz map which fixes \( A \) and is homotopic to the identity relative to \( A \), and if \( X\in \Span(C,A) \), then \( g(X)^\dagger\in \Span(C,A) \).
				\item\label{lim} If \( \{X_k\}_{k\in \N}\subset \Span(C,A) \) and \( X_k \to X_0 \) in the Hausdorff distance, and if \( X_0\setminus A \) is \( m \)-rectifiable and satisfies \( \H^m(X\setminus A)<\i \), then \( X_0^\dagger \in \Span(C,A) \).
			\end{enumerate}
			We will also call \( \Span(C,A) \) a spanning collection in the case that \( C \) is a Riemannian manifold if in place of Axiom \eqref{lip}, the following weaker axiom holds:
			\begin{enumerate}
				\item[\eqref{lip}'] If \( g: C\to C \) is a diffeomorphism which fixes \( A \) and is isotopic to the identity relative to \( A \), and if \( X\in \Span(C,A) \), then \( g(X)\in \Span(C,A). \)
			\end{enumerate}
		\end{defn}
		
		Our main result is the following:
		
 		\begin{thm}
 			\label{thm:main}
			Suppose that \( C \) is a compact uniform localizable Lipschitz neighborhood retract and that \( A\subset C \) is closed (possibly empty.) Let \( m\in \N \). If \( \F^m \) is elliptic and if \( \Span(C,A) \) is a non-empty spanning collection, then \( \Span(C,A) \) contains an element which minimizes the functional \( X\mapsto \F^m(X\setminus A) \) among elements of \( \Span(C,A) \).
		\end{thm}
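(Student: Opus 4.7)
The plan is to execute the direct method in the calculus of variations, taking care that a generic minimizing sequence need not be quasiminimal and therefore must first be upgraded to a Reifenberg regular sequence. Let $\{X_k\}\subset\Span(C,A)$ be a minimizing sequence for $X\mapsto\F^m(X\setminus A)$. Since $f\ge a>0$ on $T^mC$, we obtain a uniform bound $\H^m(X_k\setminus A)\le M<\infty$. Because $C$ is compact, Blaschke's selection theorem lets us extract a subsequence (still called $\{X_k\}$) converging in Hausdorff distance to some closed set $X_0\supset A$. The first real step is then to modify $\{X_k\}$, via small diffeomorphisms or Lipschitz deformations admitted by Axiom~\eqref{lip}, into a Reifenberg regular sequence in the sense of Definition~\ref{def:rregular}: each member carries a uniform lower density ratio down to a scale $r_k\to 0$. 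This regularization is standard once the axioms on $\Span(C,A)$ are in place, because the spanning class is closed under the requisite deformations and the new sequence remains minimizing.

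Next I would analyse the limit $X_0$. Applying Fleming's upper-density construction (Lemma~\ref{lem:upperdensity}) to the Reifenberg regular sequence yields $\H^m(X_0\setminus A)<\infty$ together with uniform upper density bounds for $X_0$; combined with the lower bounds inherited from Reifenberg regularity, this puts $X_0\setminus A$ into the regime where Almgren's projection argument (Theorem~\ref{thm:rectifiable}, following \cite{almgrenannals}~3.2(c)) applies, giving $m$-rectifiability of $X_0\setminus A$. These two constructions are where Federer--Fleming and Besicovitch--Federer projections enter; the fact that deformations along such projections stay in $\Span(C,A)$ is exactly the content of Axiom~\eqref{lip}, and the Hausdorff closure Axiom~\eqref{lim} then hands us $X_0^\dagger\in\Span(C,A)$.

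The remaining and most delicate step is lower semicontinuity of the anisotropic functional: I need to show
\[
\F^m(X_0\setminus A)\;\le\;\liminf_{k\to\infty}\F^m(X_k\setminus A).
\]
This is precisely the purpose of ellipticity. The strategy is to fix a generic $p\in X_0\setminus A$ where a unique tangent plane $E=T_pX_0$ exists, and for small $r$ compare the disk $E(p,r)$ to the slice $X_k\cap B(p,r)$ for large $k$. Hausdorff convergence together with Reifenberg regularity forces $X_k\cap\partial B(p,r)$ to lie near $E\cap\partial B(p,r)$; after a small Lipschitz perturbation (legal by Axiom~\eqref{lip}) one matches the boundaries exactly, obtaining a competitor $Z_k\subset B(p,r)$ with $Z_k\cap\partial B(p,r)=E\cap\partial B(p,r)$. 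The non-retraction condition~\eqref{elliptic:3} is the topological ingredient that has to be verified: it follows because $X_k$ genuinely fills the disk in the sense inherited from its spanning role, so the cone over the sphere cannot deformation-retract through $Z_k$. Ellipticity inequality~\eqref{eq:elliptic} then gives a lower bound on $\F^m(Z_k)$ in terms of $f(p,E)\H^m(E(p,r))$, and a Vitali cover argument over $X_0\setminus A$ packages these local inequalities into the desired global bound. Passing to the liminf and noting $X_0^\dagger\setminus A$ has the same $\F^m$ as $X_0\setminus A$, we conclude that $X_0^\dagger$ realizes the infimum.

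The main obstacle I expect is this last step: extracting ellipticity in its local form, and in particular verifying the non-retraction hypothesis~\eqref{elliptic:3} for the almost-competitors $Z_k$ built from $X_k\cap B(p,r)$. This is where the geometry of the spanning class, the Reifenberg regularity, and the Hausdorff convergence must be combined carefully, and where the generalization from Euclidean ambient to a uniform localizable Lipschitz neighborhood retract forces the use of the local retractions $\pi_{p,r}$ from Definition~\ref{def:LNR} to straighten out the competitors before invoking ellipticity.
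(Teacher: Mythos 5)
Your plan matches the paper's architecture step for step: Blaschke to extract a Hausdorff limit, regularize to a Reifenberg regular minimizing sequence, Fleming's differential-inequality argument (Lemma~\ref{lem:upperdensity}) combined with the cone/Federer--Fleming construction (Lemma~\ref{cor:deformation}) for upper density bounds, Besicovitch--Federer for rectifiability (Theorem~\ref{thm:rectifiable}), Axiom~\eqref{lim} to place $X_0^\dagger$ in the class, and a Lusin/Vitali cover feeding ellipticity for lower semicontinuity (Theorem~\ref{thm:lsc}). You also correctly identify the lower semicontinuity step and the non-retraction hypothesis~\eqref{elliptic:3} as the crux.

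The one place where your stated mechanism would not go through as described is the verification of~\eqref{elliptic:3}. You propose that ``$X_k$ genuinely fills the disk in the sense inherited from its spanning role,'' but the spanning axioms are too weak to deliver this directly: an arbitrary element of $\Span(C,A)$ may well retract onto $E\cap\fr B(p,r)$ (think of a competitor carrying an unnecessary thin sheet across the ball, or no mass there at all). The paper instead extracts the non-retraction property from \emph{minimality}, not from the spanning condition: if a retraction $\rho$ of $\phi_j(X_k)(p,(1-\e/2)r_j)$ onto $E\cap\p B(p,(1-\e/2)r_j)$ existed, it could be upgraded (Stone--Weierstrass, Kirszbraun) to a global Lipschitz map so that $\{\pi(\rho(X_k))\}$ is an admissible competing sequence by Axiom~\eqref{lip}; the retraction would then destroy a definite amount of $\F^m$-mass inside the ball, driving $\F^m$ strictly below $\frak{m}$ and contradicting the choice of $\{X_k\}$. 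The same contradiction-with-minimality mechanism is also what forces $E\cap\p B(p,(1-\e/2)r_j)\subset\phi_j(X_k)$, which you need before the non-retraction question even makes sense. So the spanning class enters only through closure under these maps (Axiom~\eqref{lip}), while the topological content of~\eqref{elliptic:3} is supplied by minimality. With that correction, the rest of your outline follows the paper's proof.
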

		 
		We next provide some examples of spanning collections.\\
	
		\begin{examples}
			\label{examples:span}
			\begin{itemize}\mbox{}\\
				\item \textbf{Algebraic spanning conditions}
			 
				\begin{defn}
					\label{def:algebraicspanning}
					Suppose \( (C,A) \) is a compact pair. An \emph{\textbf{algebraic spanning condition}} is a subset \( \L=(L_1,L_2,L_3,L_4) \) of \( H_{m-1}(A)\sqcup H^{m-1}(A)\sqcup H_m(C)\sqcup H^m(C) \). These are understood to be reduced \v{C}ech (co)homology groups, and the coefficients may vary between the four so long as the homology groups have compact coefficients. Let \( X\subset C \) be compact with \( X\supset A \) and let \( i: A\to X \) and \( j: X\to C \) denote the inclusions. We say \( X \) \emph{\textbf{spans \( \L \)}} if \( L_1\subset \ker(i_*) \), \( L_2\cap \image(i^*)=\emptyset \), \( L_3\subset \image(j_*) \), and \( L_4\cap \ker(j^*)=\emptyset \). Let \( \Span(C,A,\L) \) denote the set of all surfaces which span \( \L \).
				\end{defn}
				
				It follows from standard \v{C}ech theory that \( \Span(C,A,\L) \) is a spanning collection. See \cite{lipschitz} Lemmas 1.2.17 and 1.2.18, and Adams' appendix in \cite{reifenberg} for details.\\
			
				For example, suppose \( C = B(0,1) \setminus B(0,1/5) \subset \R^3\) and \( A \) is the cubical frame in Figure \ref{fig:BlueCubeBall}. Let \( L_1 \) be any element of \( H_1(A) \) and \( L_3 \) a generator of \( H_2(C) \). Let \( L_2 = L_4 = \emptyset \). Then the surface \( X \) in Figure \ref{fig:BlueCubeBall} is an element of \( \Span(C,A,\L) \).
			
				\begin{figure}[h]
					\centering	
					\includegraphics[width=.6\textwidth]{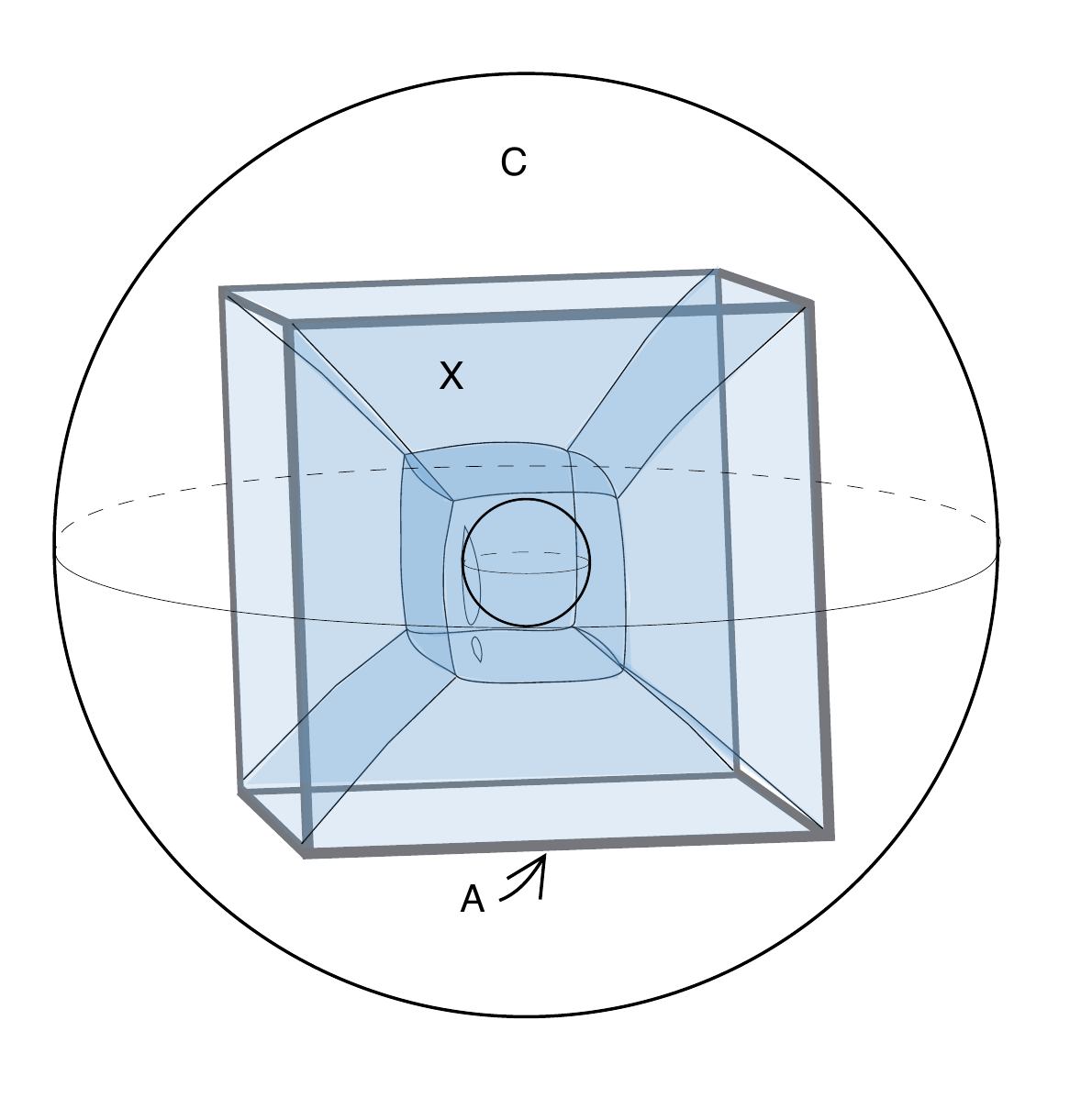}
					\caption{The surface \( X \) spans the cubical frame \( A \) within the ambient space \( C= B(0,1) \setminus B(0,1/5) \) under a number of different algebraic spanning conditions.}
					\label{fig:BlueCubeBall}
				\end{figure}
				\mbox{}\\
			 
				\item \textbf{Homotopical linking number spanning conditions}\\ 
				If \( C \) is a Riemannian manifold, a spanning collection may be defined using the homotopical ``linking number'' spanning condition defined by the authors in \cite{hpplateau}. The following definition generalizes this idea: 
			 
				\begin{defn}
					\label{linking}
					Suppose \( \cal{S} \) is a collection of compact, smoothly embedded manifolds \( M\subset C\setminus A \) which is invariant under diffeomorphisms which fix \( A \) and isotopic to the identity relative to \( A \). Let \( \Span(C,A) \) be the collection of all surfaces \( X \) which intersect non-trivially with every element of \( \cal{S} \). We say that elements of \( \Span(C,A) \) satisfy a \emph{\textbf{linking number spanning condition}}.
				\end{defn}
				It is straightforward to show that \( \Span(C,A) \) is a spanning collection.\\			 
			 
				\item \textbf{Sliding boundaries and minimizers}\\ 
				One may pick some initial surface \( X \), and define \( \Span(C,A) \) to be the smallest spanning collection which contains \( X \), and which is also closed under the action of Lipschitz functions \( g: (C,A)\to (C,A) \) which are homotopic to the identity. This is a version of what is known in the literature as \emph{\textbf{sliding boundaries}} (see Figure \ref{fig:sliding2}.) Sliding boundaries and minimizers have been studied in continuum mechanics for years, (see \cite{Podio-Guidugli}, for example.) and a definition for sliding boundaries suitable for geometric measure theory was introduced in \cite{david2014}, while \cite{delellisandmaggi} and \cite{ghiraldin} use a somewhat different one. \\

				\begin{figure}[h]
					\centering
					\includegraphics[width=.7\textwidth]{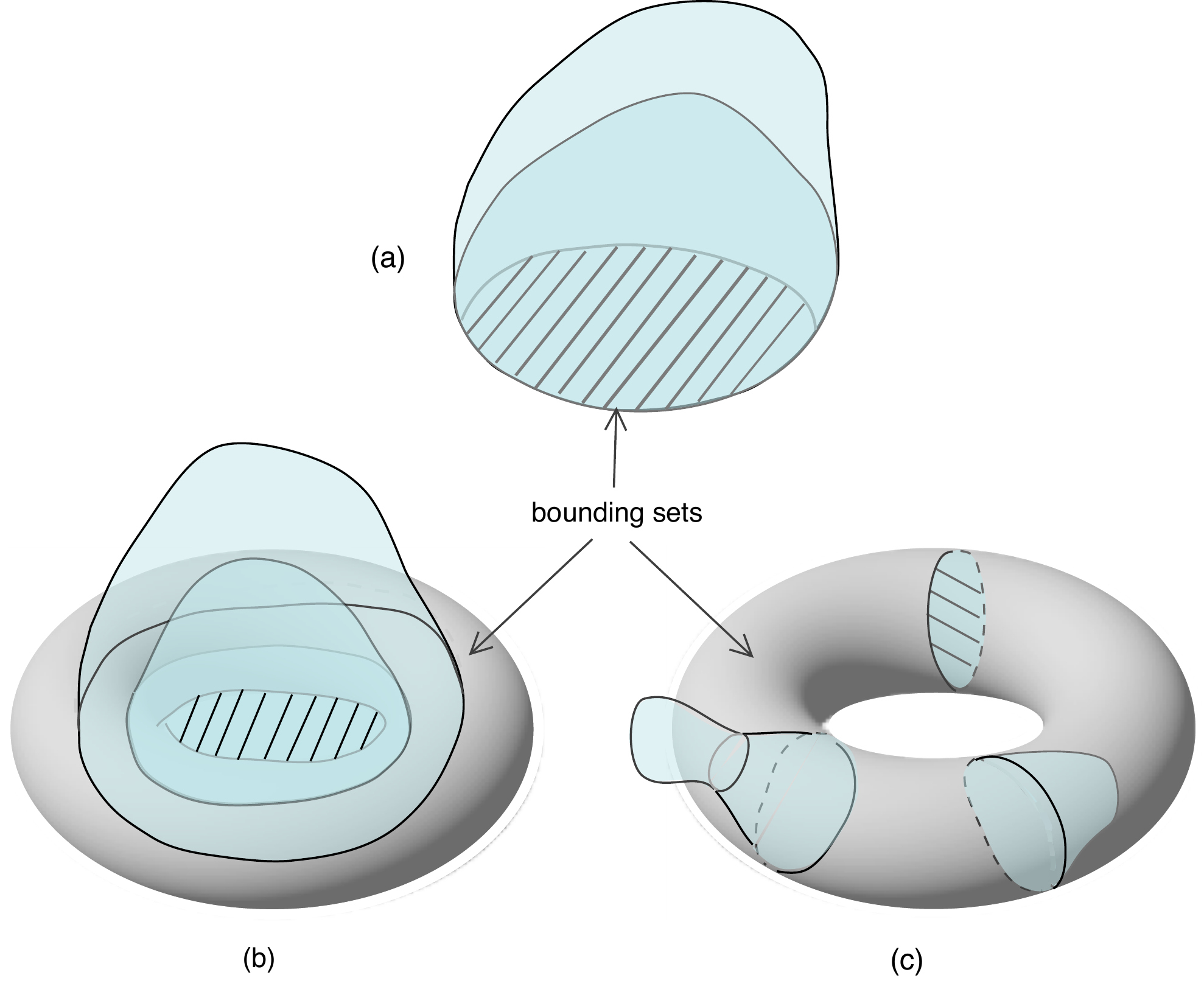}
					\caption{\textbf{Sliding boundaries}. In each of the three figures there are three competitors including a shaded one with minimal area. Figure (a) depicts the classical Plateau problem where the bounding set is a circle \( S \). The bounding set of both Figures (b) and (c) is a \( 2 \)-torus.}
					\label{fig:sliding2}
				\end{figure}
			\end{itemize}
		\end{examples}

	\subsection{More on ellipticity}
		\label{moreellipticty}
	
		Ellipticity of \( \F^m \) is implied in the following case: Suppose \( C \) is a Riemannian manifold. For \( p\in C \), let \( f_p: T^m C\to [a,b] \) denote the function \( (q,T)\mapsto f(p, \tilde{T}) \), where it is understood that
		\begin{enumerate}
			\item The function \( f_p \) is only defined for \( q \) in a geodesic neighborhood \( U_p \) of \( p \); and
			\item The \( m \)-plane \( \tilde{T} \) denotes the parallel transport of \( T \) along the unique minimizing geodesic from \( q \) to \( p \).
		\end{enumerate}
	
		For an \( m \)-rectifiable \( \H^m \) measurable set \( X\subset U_p \), define \( \F_p^m(X)=\int_X f_p(q,T_q X) d\H^m \). Suppose
		\begin{enumerate}
			\item\label{almgrenelliptic} For all \( p\in C \) there exists \( e(p)>0 \) such that
				\begin{equation}
					\label{eq:almgrenelliptic}
					\F_p^m(X)-\F_p^m(D)\geq e(p) (\H^m(X)-\H^m(D)),
				\end{equation}
				for all geodesic \( m \)-planes \( D\subset U_p \) and compact \( m \)-rectifiable sets \( X\subset U_p \) which contain \( \fr D \) and do not retract onto \( \fr D \); and
			\item The function \( f \) is equi-lower semicontinuous, in the sense that for each \( (p,T)\in T^mC \) and each \( \e>0 \) there exists \( \d>0 \), independent of \( T \), such that if \( d(p,q)<\d \), then \( f(p,T)\leq f(q,\tilde{T})+ \e \).
		\end{enumerate}
		Then \( \F^m \) is elliptic. Indeed, aside from a slight broadening of the collection of sets \( X \) which must satisfy \eqref{eq:almgrenelliptic}, the assumption \ref{almgrenelliptic} is precisely the ellipticity condition defined in \cite{almgrenannals} 1.2.

\section{Constructions}

Lemma \ref{lem:FF} is a slight generalization of \cite{davidsemmes} Proposition 3.1, which is a version of the Federer-Fleming projection theorem \cite{federerfleming} 5.5, first modified for sets in \cite{almgrenannals} 2.9 (see \cite{flatchains} for a much simpler proof.) Given a (closed) \( n \)-cube \( Q\subset \R^n \) and \( j \ge 0 \), let \( \D_j(Q) \) denote the collection of all \( n \)-cubes in the \( j \)-th dyadic subdivision of \( Q \). For \( 0 \le d \le n \) let \( \Delta_{j,d}(Q) \) denote the collection of the \( d \)-dimensional faces of the \( n \)-cubes in \( \D_j(Q) \) and let \( S_{j,d}(Q)\subset Q \) denote the set union of these faces.

\begin{lem}
	\label{lem:FF}
	Suppose \( E \) is a compact subset of \( Q \) such that \( \H^d(E) < \i \). For each \( j \ge 0 \), there exists a Lipschitz map \( \phi: \R^n\times [0,1] \to \R^n \) with the following properties:
	\begin{enumerate}
		\item\label{lem:ff:item:1} \( \phi_t = Id \) on \( Q^c\cup S_{j,d}(Q) \) for all \( t\in [0,1] \);
		\item\label{lem:ff:item:0} \( \phi_0 = Id \);
		\item\label{lem:ff:item:2} \( \phi_1(E) \subset S_{j,d}(Q) \cup \fr (Q) \);
		\item\label{lem:ff:item:3} \( \phi(R\times[0,1]) \subset R \) for each \( R \in \D_j(Q) \);
		\item\label{lem:ff:item:4} \( \H^d(\phi_1(E \cap R)) \le c_1 \H^d(E \cap R) \) for all \( R \in \D_j(Q) \) where \( c_1=c_1(n) \) depends only on \( n \);
		\item\label{lem:ff:item:5} \( \H^{d+1}(\phi(E\cap R\times [0,1]))\le c_1 2^{-j}\diam(Q) \H^d(E\cap R) \) for all \( R\in \D_j(Q) \);
		\item\label{lem:ff:item:6} If \( E \) is \( K \)-semiregular\footnote{See \cite{davidsemmes} Definition 3.29. Let \( K>0 \). A compact set \( E \) is \emph{\textbf{\( K \)-semiregular of dimension \( d \)}} if for \( x\in \R^n \) and \( 0<r\leq r' \), the set \( E(x,r') \) can be covered by a collection of at most \( K (r'/r)^d \) closed balls of radius \( r \).}, then the Lipschitz constant of \( \phi_1 \) depends only on \( n \) and \( K \). Furthermore, there exists a constant \( \d>0 \) depending only on \( n \) and \( K \) such that if \( F\subset Q\cap \cal{N}(E,\d 2^{-j}\diam(Q)) \), then \( \phi_1(F)\subset S_{j,d}(Q)\cup \fr(Q) \). 
	\end{enumerate}
\end{lem}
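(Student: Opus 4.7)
The plan is to work cube-by-cube inside $\D_j(Q)$ and within each $R\in\D_j(Q)$ perform a sequence of $n-d$ radial projections, one for each codimension from $n$ down to $d+1$, each step pushing the image of $E$ one skeleton lower while keeping the lower-dimensional skeleton fixed. Since all constructions will fix $S_{j,d}(Q)\cup\fr(Q)$, they assemble to a single Lipschitz map on $\R^n$ satisfying \eqref{lem:ff:item:1}--\eqref{lem:ff:item:3}. Parametrizing the $k$-th projection on the subinterval $[(k-1)/(n-d),k/(n-d)]$ of $[0,1]$ and setting $\phi_0=\mathrm{Id}$ gives the required homotopy.

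The single-step construction is the classical center-of-projection averaging argument. Fix $R\in\D_j(Q)$ and suppose inductively that $E\cap R\subset R^{(\ell)}$, the $\ell$-skeleton of $R$, for some $\ell>d$. For each $\ell$-face $F$ of $R$, we claim there exists a point $c$ in a definite portion of $F$ (for instance a concentric subcube of half the side length) such that the radial projection $\psi_{F,c}:F\setminus\{c\}\to\fr F$ satisfies $\H^d(\psi_{F,c}(E\cap F))\le c_0(n)\H^d(E\cap F)$. One averages the local Lipschitz constant $\|D\psi_{F,c}\|^d$, which is controlled by $|x-c|^{-d}$, over $c$ in the subcube; since $\H^d(E\cap F)<\infty$ and $d<\ell$, Fubini yields $\int |x-c|^{-d}\,dc<\infty$ uniformly, so a generic choice of $c$ works. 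Gluing the projections on all $\ell$-faces by the identity on $R^{(\ell-1)}$ and extending linearly along the segment $[c,\psi_{F,c}(x)]$ in $F$ produces a Lipschitz self-map of $R$ fixing $R^{(\ell-1)}$ and sending $E\cap R$ into $R^{(\ell-1)}$; the measure bound \eqref{lem:ff:item:4} is obtained by composing the $n-d$ factors of $c_0(n)$. The track bound \eqref{lem:ff:item:5} follows from the fact that the homotopy moves each point along a path of length at most $\diam(R)=2^{-j}\diam(Q)$ inside $R$, so the swept set lies in a $(d+1)$-rectifiable subset of $R$ with $(d+1)$-measure controlled by $\diam(R)\cdot\H^d(E\cap R)$ up to the same constant.

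The main obstacle is that the averaging argument above, as stated, only gives a center $c$ far from $E$ in measure, not in distance; consequently the Lipschitz constant of the projection depends on $\H^d(E\cap F)$ and can blow up. This is precisely where the $K$-semiregularity hypothesis of \eqref{lem:ff:item:6} is used. Semiregularity says that $E$ can be efficiently covered by balls at every scale, so a standard volume-counting estimate shows that $E\cap F$ occupies only a controlled fraction of $F$ at every scale comparable to $\diam(F)$. Hence one can choose $c$ at a definite distance $\d\cdot\diam(F)$ from $E$, with $\d=\d(n,K)>0$. With such a $c$, the radial projection has Lipschitz constant bounded by $\d^{-1}$, and moreover the same projection sends the full $\d\cdot\diam(F)$-neighborhood of $E$ in $F$ into $\fr F$. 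Iterating across the $n-d$ codimensions and across all cubes $R\in\D_j(Q)$ yields the uniform Lipschitz bound on $\phi_1$ and the neighborhood conclusion of \eqref{lem:ff:item:6}, after possibly shrinking $\d$ by a constant factor depending only on $n$ to absorb the iterated retractions.
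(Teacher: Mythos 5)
Your proposal follows essentially the same route as the paper's proof, which is entirely a pointer to the literature: the cube-by-cube sequence of radial projections through the skeleta, with the center chosen by averaging and the homotopy obtained by concatenating the straight-line interpolations, is exactly what the paper invokes from \cite{davidsemmes} Proposition 3.1 and Lemma 3.10 (for items (a)--(d)), \cite{reifenberg} Lemma 6 and \cite{davidsemmes} (3.14) (for (e)), and \cite{davidsemmes} Lemma 3.31 and (3.33) (for (f)). Your semiregularity argument for (f) --- a volume count showing that a center $c$ can be placed at distance $\geq \d(n,K)\,\diam F$ from $E$ in a subcube of each face $F$ --- matches the content of David--Semmes Lemma 3.31. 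The one place you under-argue is (e): the track estimate does not follow merely from each point moving a distance $\leq \diam R$; the $(d+1)$-Jacobian of the swept map $(x,t)\mapsto (1-t)\tau_{\ell+1}(x)+t\tau_\ell(x)$ is bounded by $|\p_t|\cdot J_d(\p_x)$, and the $x$-part again involves $\|D\psi_\ell\|^d\sim |x-c_\ell|^{-d}$, which must be controlled by the same Fubini averaging over $c_\ell$ used for (d). In other words, the center should be chosen to simultaneously control the image measure and the track measure (one finds a single generic $c$ doing both); this is exactly what Reifenberg's Lemma 6 and David--Semmes (3.14) encode, and with that amendment your argument is a correct unpacking of the cited results.
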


\begin{proof}
	The map \( \phi \) is the concatenation of the straight line homotopies between the maps \( \psi_m \) in \cite{davidsemmes} Lemma 3.10 roughly described as follows: Choose a small cubical grid and consider the union \( L \) of cubes in the grid that meet \( E \). Ignore all other cubes of the grid. For each grid cube \( R \subset L \), one can find a point \( q \in R\setminus E \), near the center of \( R \) for which there exists \( s > 0 \) with \( B(q,s) \subset R\setminus E \) (see \cite{davidsemmes} Lemma 3.10.) Use \( q \) to radially project \( E \cap R \) to \( \p R \). Repeat in each \( (n-1) \)-dimensional face of \( R \) and radially project to the \( (n-2) \)-skeleton of \( R \). Continue until the resulting image of \( E \cap R \) is contained in the \( d \)-skeleton of \( R \). Each projection determines a straight-line homotopy from the identity mapping to the projection.

	Parts \ref{lem:ff:item:1}-\ref{lem:ff:item:4} are \cite{davidsemmes} Proposition 3.1. Part \ref{lem:ff:item:5} is then apparent from \cite{reifenberg} Lemma 6 and \cite{davidsemmes} (3.14.) Part \ref{lem:ff:item:6} follows from \cite{davidsemmes} Lemma 3.31 and (3.33.)
\end{proof}

If \( Y\subset \R^n \) and \( p\in \R^n \), let \( C_p Y \) denote the union of the closed rays with one endpoint \( p \) and the other lying in \( Y \). We shall repeatedly make use of a ``cone construction'' in which a portion of a surface lying in a ball is replaced with the cone on the portion of the surface lying on the boundary of the ball. However, the resulting set \( C_p Y \) does not have very good estimates on its Hausdorff measure unless the set \( Y \) is polyhedral (see \cite{reifenberg} Lemmas 5 and 6.) Instead, we will first use a Federer-Fleming projection to push \( Y \) onto a cubical grid before coning, and this will yield nicer estimates.

The idea of deforming a surface to produce an approximation of the cone construction is due to \cite{delellisandmaggi}. We will take a Hausdorff limit of these deformations to produce a set which is contained in the outright cone. This will allow us to apply the cone construction to surfaces in a spanning collection and remain within the collection. One trivial yet important fact is the following:

\begin{lem}
	\label{lem:commutativity}
	If \( \{X_i\}_{i\in \N} \) is a sequence of compact subsets of \( \R^n \) and \( X_i \) converges to a compact set \( X \) in the Hausdorff metric, and if \( f: \R^n\to \R^n \) is continuous, then \( f(X_i) \) converges to \( f(X) \) in the Hausdorff metric.
\end{lem}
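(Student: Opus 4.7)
The plan is to reduce the statement to the uniform continuity of $f$ on a suitable bounded set. First I would observe that the collection $\{X_i\}_{i\in \N}\cup \{X\}$ is contained in a common closed ball. Indeed, $X$ is compact, so $X\subset B(0,R)$ for some $R$, and Hausdorff convergence implies that for all sufficiently large $i$, $X_i\subset \cN(X,1)\subset B(0,R+1)$. The finitely many remaining $X_i$ are individually compact and therefore bounded, so enlarging $R$ if necessary we may assume all $X_i$ and $X$ lie in a single closed ball $\bar{B}$.

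Next, since $f$ is continuous and $\bar{B}$ is compact, the Heine--Cantor theorem says $f$ is uniformly continuous on $\bar{B}$. Fix $\eta>0$ and choose $\delta>0$ such that for all $x,y\in \bar{B}$ with $|x-y|<\delta$, we have $|f(x)-f(y)|<\eta$. By hypothesis, there is an index $N$ such that $d_H(X_i,X)<\delta$ for all $i\ge N$.

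I would then verify the two inclusions defining $d_H(f(X_i),f(X))\le \eta$. Given any $y\in f(X_i)$, write $y=f(x)$ with $x\in X_i\subset \bar{B}$. Since $d_H(X_i,X)<\delta$, there exists $x'\in X$ with $|x-x'|<\delta$, and uniform continuity gives $|f(x)-f(x')|<\eta$, so $y\in \cN(f(X),\eta)$. The symmetric argument, starting from $y\in f(X)$, shows $f(X)\subset \cN(f(X_i),\eta)$. Since $\eta>0$ was arbitrary, $f(X_i)\to f(X)$ in Hausdorff distance.

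There is no real obstacle here; the only point requiring a moment's care is the preliminary reduction to a common compact ball, since uniform continuity of $f$ is not automatic on all of $\R^n$. Once that is in hand, the estimate is a direct $\delta$--$\eta$ unpacking of the definitions.
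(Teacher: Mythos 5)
Your proof is correct, and it is the standard argument: reduce to a common compact ball, invoke Heine--Cantor for uniform continuity, then run the $\delta$--$\eta$ argument for both Hausdorff inclusions. The paper states this lemma as a ``trivial yet important fact'' and gives no proof at all, so there is no alternative approach in the source to compare against; your write-up supplies exactly the elementary argument the authors evidently had in mind, with the right amount of care taken on the preliminary reduction to a bounded set (which is the only point where one could slip, since $f$ need not be uniformly continuous on all of $\R^n$).
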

	
\begin{lem}
	\label{lem:deformcone}
	Suppose \( X\subset \R^n \) is compact, \( p\in \R^n \) and \( r>0 \). There exists a sequence of diffeomorphisms \( \xi_i \) of \( \R^n \) which are the identity outside \( B(p,r) \) and for which \( \xi_i(B(p,r))\subset B(p,r) \), such that \( \xi_i(X) \) converges in the Hausdorff metric to a compact set \( Y \) which is contained in \( X\setminus B(p,r) \cup C_p(x(p,r)) \).
\end{lem}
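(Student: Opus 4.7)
The plan is to build $\xi_i$ as a smooth radial map based at $p$ which fixes a collar neighborhood of $\partial B(p,r)$ and squeezes most of the interior of $B(p,r)$ into a tiny ball around $p$. Because the map is radial it sends each half-line out of $p$ to itself, so points of $X$ whose image is forced to stay near the boundary remain on rays that approximate rays of $C_p(x(p,r))$, while points whose image gets driven deep into the ball are collapsed toward $p$. Both behaviors leave the Hausdorff limit inside the cone (plus the untouched exterior $X\setminus B(p,r)$).

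Concretely, for each $i\in \N$ choose a smooth strictly increasing diffeomorphism $\phi_i:[0,r]\to [0,r]$ with $\phi_i(0)=0$, $\phi_i'(0)>0$, $\phi_i(s)=s$ for $s\in[r-1/i,r]$, and $\phi_i(s)\le 1/i$ for $s\in[0,r-2/i]$; such a profile is obtained by smoothing a three-piece piecewise linear function. Define
\[
\xi_i(q)=\begin{cases} p+\phi_i(|q-p|)\,\frac{q-p}{|q-p|}, & q\in B(p,r)\setminus\{p\}, \\ p, & q=p, \\ q, & q\notin B(p,r). \end{cases}
\]
The condition $\phi_i(s)=s$ near $s=r$ makes $\xi_i$ agree smoothly with the identity outside $B(p,r)$, while $\phi_i'(0)>0$ provides smoothness at $p$; since $\phi_i$ is a diffeomorphism of $[0,r]$ the map $\xi_i$ is a diffeomorphism of $\R^n$. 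The required identities $\xi_i=\mathrm{id}$ on $B(p,r)^c$ and $\xi_i(B(p,r))\subset B(p,r)$ are immediate from the construction.

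All sets $\xi_i(X)$ lie in the fixed compact set $X\cup B(p,r)$, so by the standard compactness of Hausdorff convergence on uniformly bounded closed subsets of $\R^n$ we may pass to a subsequence along which $\xi_i(X)\to Y$ for some compact $Y$. To verify $Y\subset (X\setminus B(p,r))\cup C_p(x(p,r))$, take $y\in Y$ and pick $q_i\in X$ with $\xi_i(q_i)\to y$; compactness of $X$ yields a further subsequence with $q_i\to q\in X$. If $|q-p|>r$ then eventually $\xi_i(q_i)=q_i$ and $y=q\in X\setminus B(p,r)$. If $|q-p|<r$ then eventually $|q_i-p|\le r-2/i$, forcing $|\xi_i(q_i)-p|=\phi_i(|q_i-p|)\le 1/i$, so $y=p\in C_p(x(p,r))$. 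In the borderline case $|q-p|=r$ we have $q\in x(p,r)$; each $\xi_i(q_i)$ lies on the closed segment from $p$ to $q_i$ by radiality, and these segments converge to the segment from $p$ to $q$, which is contained in $C_p(x(p,r))$, so $y$ lies on that segment as well. The construction is elementary; the only point that needs a brief check is this last boundary case, where the ray-preserving nature of the radial map is precisely what produces a cone point in the limit.
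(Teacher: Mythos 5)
Your construction is correct and essentially the same as the paper's: both build radial ``squeeze'' diffeomorphisms that fix the exterior of \( B(p,r) \), collapse the interior toward \( p \) as \( i\to\i \), and then extract a Hausdorff-convergent subsequence by Blaschke selection. The paper writes the radial profile as a one-parameter family \( \psi_t \) with \( t\to 1 \) and states the containment \( Y\subset X\setminus B(p,r)\cup C_p(x(p,r)) \) ``by construction,'' whereas you give explicit profiles \( \phi_i \) and spell out the three-case verification (interior, exterior, boundary), which is a fair elaboration of the same argument.
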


\begin{proof} 
	For \( t\in [0,1) \), let \( \psi_t:[0,\infty)\to [0,\infty) \) be a smooth, increasing function, such that 
	\begin{enumerate}
		\item \( \psi_t'(s)>0 \) for all \( s \);
		\item \( \psi_t(rt)< r(1-t) \);
		\item \( \psi_t(s)=s \) for all \( s\ge r \).
	\end{enumerate}

	Pick some sequence \( \{t_i\}_{i\in \N}\subset [0,1) \) with \( t_i\to 1 \) and let \( \xi_i(x) = p + \psi_{t_i}(|x-p|)\frac{x-p}{|x-p|} \) for \( x \in \R^n \). Then \( \xi_i \) is a diffeomorphism of \( \R^n \) satisfying \( \xi_i = Id \) on \( B(p,r)^c \) for all \( i\in \N \), and \( \lim_{i\to \i} \xi_i(x) = p \) for each \( x\in \cN(p,r) \).

	By taking a subsequence if necessary, we may assume without loss of generality that \( \{\xi_i(X)\}_{i\in \N} \) converges in the Hausdorff metric to a compact set \( Y \). By construction, this set \( Y \) is contained in \( X\setminus B(p,r) \cup C_p(x(p,r)) \).
\end{proof}

\begin{lem}
	\label{cor:deformation}
	Suppose \( (C,X,A) \) is a compact triple, \( C \) is a uniform localizable Lipschitz retract, and \( X\in \Span(C,A) \). Let \( p\in C\setminus A \) and suppose \( r>0 \) is chosen smaller than the retraction radius of \( C \) at \( p \), and so that \( B(p,\sqrt{n}r)\cap A=\emptyset \), \( x(p,r) \) is \( (m-1) \)-rectifiable, and \( \H^{m-1}(x(p,r))<\i \). Then for every \( \e>0 \) there exist compact sets \( P_\e \subset C(p,r) \) and \( T_\e \subset C(p,r) \) such that
	\begin{enumerate}
		\item\label{cor:deformation:item:2} \( T_\e \subset \cN(x(p,r), \e r) \);
		\item\label{cor:deformation:item:3} \( \H^m(T_\e) \le c_1 \e r \H^{m-1}(x(p,r)) \);
		\item\label{cor:deformation:item:4} \( \H^m(P_\e) \le \g r^m \) where \( 0<\g <\i \) depends on \( n, C \) and \( \e \);
		\item\label{cor:deformation:item:1} \( (P_\e \cup T_\e \cup (X \setminus B(p,r)))^\dagger \) is an element of \( \Span(C,A) \).
	\end{enumerate}
\end{lem}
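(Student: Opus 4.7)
I will construct the set $W_\e := (P_\e \cup T_\e \cup (X\setminus B(p,r)))^\dagger$ as the Hausdorff limit of a sequence of Lipschitz images $g_i(X)$ and then invoke the spanning axioms \eqref{lip} and \eqref{lim}. The maps $g_i := \pi_{p,r} \circ \phi_1 \circ \xi_i|_C$ are built from three ingredients: the cone-crunching diffeomorphisms $\xi_i$ from Lemma \ref{lem:deformcone} applied in $B(p,r)$; a single Federer-Fleming projection $\phi_1$ from Lemma \ref{lem:FF} applied in a suitable cube $Q\subset B(p,r)$ with a dyadic grid at scale comparable to $\e r$; and the localized Lipschitz retraction $\pi_{p,r}:C\cup B(p,r)\to C$, which is available because $r$ is below the retraction radius of $C$ at $p$.

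First I would verify axiom \eqref{lip}. Since $A$ is disjoint from $B(p,\sqrt{n}r) \supset B(p,r) \supset Q$, both $\xi_i$ and $\phi_1$ are identity on $A$, and $\pi_{p,r}$ fixes $C\supset A$; moreover $\phi_1\circ\xi_i$ preserves $B(p,r)$, so $\phi_1\circ\xi_i(C)\subset C\cup B(p,r)$ and $g_i$ is a well-defined Lipschitz self-map of $C$ fixing $A$. A homotopy $Id_C\simeq g_i$ rel $A$ is obtained by concatenating the natural parameter homotopies of $\xi_i$ (through the $t$-parameter) and $\phi_1$ (Lemma \ref{lem:FF}\eqref{lem:ff:item:0}), both of which remain the identity outside $B(p,r)$, and post-composing with $\pi_{p,r}$. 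Hence $g_i(X)^\dagger\in\Span(C,A)$. Next, by Lemma \ref{lem:commutativity}, $g_i(X)\to \pi_{p,r}(\phi_1(Y_1))$ in the Hausdorff metric, where $Y_1 \subset (X\setminus B(p,r))\cup C_p(x(p,r))$ is the limit of $\xi_i(X)$. I will take $T_\e$ to be the portion of this limit coming from a thin shell $B(p,r)\setminus B(p,(1-\eta)r)$ for some $\eta\in(0,\e)$, and $P_\e$ the remaining interior portion. The four conclusions will follow from: the geometric observation that points of $C_p(x(p,r))$ at radius $\ge (1-\eta)r$ lie within $\eta r$ of $x(p,r)$ (since $|p+t(v-p)-v|=(1-t)r\le\eta r$); the cone measure estimate \cite{reifenberg} Lemma 6 giving $\H^m(C_p(x(p,r))\cap\text{shell})\le c\eta r\H^{m-1}(x(p,r))$; the measure control Lemma \ref{lem:FF}\eqref{lem:ff:item:4} for $\phi_1$; the containment $\phi_1(Y_1\cap Q)\subset S_{j,m}(Q)\cup\p Q$ giving the universal bound $\H^m(P_\e)\le\g r^m$ with $\g$ depending only on $n,\kappa_p,\e$; and the Lipschitz constant $\kappa_p$ of $\pi_{p,r}$. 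Rescaling $\eta$ by a factor involving $\kappa_p, c, c_1$ yields the precise constants in conclusions (a)--(c); the limit $\pi_{p,r}(\phi_1(Y_1))$ is $m$-rectifiable of finite $\H^m$-measure as a Lipschitz image of such sets, so axiom \eqref{lim} gives (d).

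The main technical hurdle will be reconciling the cubical Federer-Fleming projection with the spherical geometry of $B(p,r)$: no single cube inscribed in $B(p,r)$ contains $B(p,(1-\e)r)$ when $\e$ is small, so $Q$ cannot simply be a maximal inscribed cube. My plan is instead to take $Q$ as a union of grid cubes approximating $B(p,(1-\eta)r)$ from inside at a much finer scale $\ll\eta r$, so that the residual collar $B(p,r)\setminus Q$ sits entirely within the shell allocated to $T_\e$ and $\phi_1$ remains the identity on $B(p,r)\setminus Q$ (thereby leaving $X\setminus B(p,r)$ untouched). Careful bookkeeping of the Lipschitz constant $\kappa_p$ through the composition, together with the choice of $\eta$ and the grid scale, will produce the stated quantitative bounds. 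The remaining items -- rectifiability of the limit, finiteness of $\H^m$, and compatibility with the $\dagger$ operation -- are routine given the tools already assembled.
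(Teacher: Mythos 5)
Your plan diverges from the paper's at the central technical point, and the fix you propose for the cube/ball mismatch is not the one the paper uses -- and your version has a gap. The paper does \emph{not} inscribe $Q$ in $B(p,r)$: it takes $Q$ to be the cube of side $2r$ centered at $p$, so that $B(p,r)$ is inscribed in $Q$ (this is precisely why the hypothesis $B(p,\sqrt n\,r)\cap A=\emptyset$ appears -- the corners of that cube sit at distance $\sqrt n\,r$ from $p$ -- a hypothesis your construction never uses, which is a tell). The paper then applies Lemma~\ref{lem:FF} with $d=m-1$ to the $(m-1)$-dimensional slice $E=y(p,r)$ on the boundary sphere, composes with the radial projection $\Pi_r$ onto $\partial B(p,r)$, and builds a map $\psi$ that sends each sphere $\partial B(p,s)$ to itself: on the shell $(1-\d)r\le s\le r$ it interpolates the Federer--Fleming homotopy via $\Pi_{r,s}\circ\tilde\phi_{(r-s)/(\d r)}\circ\Pi_{r,s}^{-1}$, and on $B(p,(1-\d)r)$ it sends rays from $p$ to rays from $p$. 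The resulting $P_\e$ is literally a cone from $p$ over $\phi_1(x(p,r))$, which lies in $S_{j,m-1}(Q)\cap\cal N(\partial B(p,r),\e r)$, and because only $N_\e$ faces of the $(m-1)$-skeleton meet that thin shell, one gets $\H^m(P_\e)\le\g r^m$ with $\g$ independent of $\H^{m-1}(x(p,r))$.

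Your proposal instead applies a single Federer--Fleming map to the full $m$-dimensional cone $C_p(x(p,r))\cap Q$ with $Q$ a ragged union of grid cubes, and claims $\H^m(P_\e)\le\g r^m$ from ``the containment $\phi_1(Y_1\cap Q)\subset S_{j,m}(Q)\cup\partial Q$.'' That containment does not give the bound: when $m<n-1$, $\partial Q$ has infinite $\H^m$ measure, so the only control on the $\partial Q$ part of the image comes from Lemma~\ref{lem:FF}\ref{lem:ff:item:4}, i.e.\ $c_1\H^m(Y_1\cap(\text{boundary cubes}))$. The cone's $m$-measure near $\partial Q$ (at radius $\approx(1-\eta)r$) scales with $\H^{m-1}(x(p,r))$, so this contribution is not $O(r^m)$ uniformly in $X$. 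The paper's $\psi$ map sidesteps the issue entirely by deforming only the $(m-1)$-slice and coning; you would need to rediscover that interpolation, or find another mechanism that keeps the image off the high-dimensional boundary faces, before conclusion \ref{cor:deformation:item:4} is justified. Separately, Lemma~\ref{lem:FF} as stated is for a single $n$-cube, not a union of grid cubes, so using it on your $Q$ requires a further (unstated) extension.
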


\begin{proof}
	The set \( X\setminus X(p,r) \cup C_p(x(p,r)) \) contains a set \( Y \) which is a Hausdorff limit of deformations of \( X \) by diffeomorphisms of \( \R^n \) by Lemma \ref{lem:deformcone}. We shall deform \( Y \) using a modification of \ref{lem:FF} and construct for each \( 0<\d<1 \) a Lipschitz deformation of \( B(p,r) \) that maps each sphere \( \p B(p,s) \) to itself for \( 0 < s \le r \), maps radial rays to radial rays on \( B(p,(1-\d)r) \) and is the identity on \( \p B(p,r) \). 
	
	Let \( \Pi_r: \R^n\setminus \{p\} \to \p B(p,r) \) and \( \Pi_{r,s}: \p B(p,r) \to \p B(p,s) \) denote radial projections. 
	 
	Let \( Q \) be an \( n \)-cube of side length \( 2r \) centered at \( p \) and apply Lemma \ref{lem:FF} to \( d = m-1 \) and \( E = y(p,r) \) and for a fixed number \( j \) of subdivisions of \( Q \), to be determined in a moment. Obtain the Federer-Fleming map \( \phi:\R^n \times [0,1] \to \R^n \) from Lemma \ref{lem:FF} and let \( \tilde{\phi}_t = \Pi_r \circ \phi_t \). Since \( y(p,r) \) is \( (m-1) \)-rectifiable, so is \( \tilde{\phi}_t(y(p,r)) \). So, for each \( t\in [0,1] \), the map \( \tilde{\phi}_t \) restricts to a map from \( B(p,r) \) to itself. At \( t=0 \) the map is the identity and at \( t=1 \), it sends \( y(p,r) \) to \( \Pi_r(\phi_1(y(p,r))) \).

	Using this homotopy, we shall define a Lipschitz map \( \psi: \R^n \to \R^n \) so that \( \psi \) sends each sphere \( \p B(p,s) \) to itself for \( 0 < s \le r \). Suppose \( (1 - \d)r \le s \le r \).

	Let \[ \psi\lfloor_{\p B(p,s)} = \Pi_{r,s} \circ \tilde{\phi}_{(r-s)/(\d r)} \circ \Pi_{r,s}^{-1}. \] Extend \( \psi \) to \( B(p, r - \d r) \) in the unique way such that each ray from \( p \) to \( q \in \p B(p, r - \d r) \) is mapped to the ray from \( p \) to \( \psi(q) \) and so that \( \psi(B(p,s))\subset B(p,s) \) for each \( 0\leq s<(1-\d)r \). Finally, extend \( \psi \) to the identity on \( B(p,r)^c \).

	Let \( \pi_{p,r}:\R^n\to C(p,r) \) denote the Lipschitz retraction given by Definition \ref{def:LNR}. It follows from Lemma \ref{lem:FF} \ref{lem:ff:item:5} that we may choose \( j \) large enough and \( 0<\d<1 \) small enough so that \[ \tilde{T_\e} := \pi_{p,r} (\psi(Y(p,r)\setminus \cN(p,(1-\d)r))) \] satisfies \ref{cor:deformation:item:3} and \ref{cor:deformation:item:2}. The set \( T_\e \) will be defined as a subset of \( \tilde{T_\e} \).
 	
	Likewise, let \[ \tilde{P_\e} = \pi_{p,r} (\psi(Y(p, (1-\d)r))). \] We will define \( P_\e \) as a subset of \( \tilde{P_\e} \), so let us establish \ref{cor:deformation:item:4}: Let \( N_\e \) be an upper bound on the number of \( (m-1) \)-dimensional faces of a cubical grid of side length \( 2^{-j(\e)} \) within \( \e \) of \( \fr B(0,1) \). Since \( \phi_1(x(p,r)) \subset \cN(x(p,r),\e r)\cap S_{j,m-1}(Q) \), it follows from \cite{reifenberg} Lemmas 2 and 5 that
	\begin{align*}
		\H^m(P_\e) &\leq \frac{\kappa^m r\sqrt{n} N_\e}{m} \left(\frac{r}{2^{j+1}}\right)^{m-1}\\ 
		&\leq \kappa^m \sqrt{n} N_\e \left(\frac{\e}{2\sqrt{n}}\right)^{m-1} r^m.
	\end{align*}
	
	We know that for each \( i \), the map \( \pi_{p,r}\circ \psi \circ \xi_i:C\to C \) is Lipschitz and homotopic to the identity relative to \( A \) (indeed, the map fixes the complement of a ball which misses \( A \)) and so by Axiom \eqref{lip}, we know that \( (\pi_{p,r}\circ \psi \circ \xi_i (X))^\dagger\in \Span(C,A) \).  By Lemma \ref{lem:commutativity}, we know that \( \pi_{p,r}\circ \psi \circ \xi_i (X) \) converges in the Hausdorff metric to \( \tilde{Z}=\tilde{P_\e} \cup \tilde{T_\e} \cup (X \setminus B(p,r)) \), which by construction is \( m \)-rectifiable away from \( A \) and satisfies \( \H^m(Z\setminus A)<\i \). By taking a subsequence if necessary, \( (\pi_{p,r}\circ \psi \circ \xi_i (X))^\dagger \) converges in the Hausdorff metric to a subset \( Z \) of \( \tilde{Z} \) which contains \( X\setminus B(p,r) \). Let \( P_\e=\tilde{P_\e}\cap Z \) and let \( T_\e=\tilde{T_\e}\cap Z \). Thus, by Axiom \eqref{lim}, \eqref{cor:deformation:item:1} holds.
	
	Now if \( C \) is a Riemannian manifold and Axiom \eqref{lip}' holds in place of axiom \eqref{lip}, then we proceed as above in a coordinate chart and omit the map \( \pi_{p,r} \). That \( \psi \) is uniformly approximable by diffeomorphisms isotopic to the identity is shown in Lemma \ref{lem:ffapprox}.
\end{proof}

\begin{lem}
	\label{lem:ffapprox}
	Suppose \( E \) is a compact subset of \( Q \) such that \( \H^d(E) < \i \), \( U\subset E \) is \( \H^d \) measurable and purely \( d \)-unrectifiable, and \( \bar{U}\subset \ring{Q} \). Then for \( 0\le j<\i \) large enough, there exists a map \( \phi \) satisfying Lemma \ref{lem:FF} \ref{lem:ff:item:0}-\ref{lem:ff:item:6} such that
	\begin{equation}
		\label{lem:ffapprox:1}
		\H^d(\phi_1(U))=0
	\end{equation}
	and 
	\begin{equation}
		\label{lem:ffapprox:2}
		\phi_t = Id \text{ on } Q^c \text{ for all } t\in [0,1].
	\end{equation}
\end{lem}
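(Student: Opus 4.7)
The plan is to rerun the Federer--Fleming construction of Lemma \ref{lem:FF} with a more restrictive choice of projection centers. Recall that, for each cube $R \in \D_j(Q)$ meeting $E$, the David--Semmes argument (\cite{davidsemmes} Lemma 3.10) selects a point $q_R \in R \setminus E$ admitting some $s_R > 0$ with $B(q_R, s_R) \subset R \setminus E$, radially projects $E \cap R$ from $q_R$ onto $\fr R$, and iterates on each $(n-1)$-face, etc., down to the $d$-skeleton. The admissible set of such $q_R$ has positive $\mathcal L^n$-measure in $R$, and this is precisely what yields the uniform Lipschitz bounds recorded in Lemma \ref{lem:FF}. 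Our modification is simply to intersect this admissible set with a further full-measure set arising from a generic-projection theorem, in order to simultaneously kill the $\H^d$-measure of the image of the unrectifiable piece $U$.

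The key additional ingredient is the Besicovitch--Federer projection theorem applied to the purely $d$-unrectifiable set $U$ with $\H^d(U)<\i$: locally, the radial projection from a point $q$ onto a face of $\fr R$ factors as a bi-Lipschitz diffeomorphism composed with an orthogonal projection of $\R^n$ onto a $d$-plane, so by the classical Besicovitch--Federer theorem (see \cite{mattila}) together with the fact that bi-Lipschitz maps preserve $\H^d$-nullity, for $\mathcal{L}^n$-almost every $q\in \R^n$ the radial projection from $q$ sends $U$ to an $\H^d$-null subset of $\fr R$. Intersecting this full-measure set with the positive-measure set of Federer--Fleming admissible centers in each $R$ yields a $q_R$ satisfying both conditions simultaneously. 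Choose $j$ large enough that every cube of $\D_j(Q)$ meeting $\bar U$ lies in $\ring Q$, which is possible because $\bar U \subset \ring Q$; for all other cubes the map is already the identity, so \eqref{lem:ffapprox:2} is built in (and in fact is contained in \ref{lem:ff:item:1}).

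With these centers fixed, run the standard FF iteration. By construction, the very first radial projection in each cube $R$ maps $U\cap R$ to an $\H^d$-null set on $\fr R$. Each subsequent projection onto a lower-dimensional face of the skeleton is Lipschitz with a uniform constant (exactly as in Lemma \ref{lem:FF}), and Lipschitz maps preserve $\H^d$-nullity; hence $\H^d(\phi_1(U))=0$, which is \eqref{lem:ffapprox:1}. The remaining properties \ref{lem:ff:item:0}--\ref{lem:ff:item:6} hold verbatim, since we have only restricted the $q_R$ to a smaller-but-still-positive-measure subset of the set used in Lemma \ref{lem:FF}, so the homotopy, retraction, and measure estimates proved there carry over unchanged.

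The main obstacle is really only conceptual: one must pass from the orthogonal-projection form of the Besicovitch--Federer theorem to the radial-projection form needed for the FF construction, and then verify that the two constraints on $q_R$ (being far from $E$ in the sense of \cite{davidsemmes} Lemma 3.10, and being generic for $U$ in the sense of Besicovitch--Federer) can be realized simultaneously. The first is handled by the local bi-Lipschitz factorization indicated above; the second is immediate from the measure-intersection of a full-measure set with a positive-measure set inside the open cube $R$.
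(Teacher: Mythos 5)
Your approach has a genuine gap at its central step. You claim that for $\mathcal{L}^n$-almost every center $q$, the radial projection from $q$ onto $\fr R$ sends the purely $d$-unrectifiable set $U$ to an $\H^d$-null subset of $\fr R$, and you justify this by asserting that the radial projection factors locally as a bi-Lipschitz diffeomorphism followed by an orthogonal projection onto a $d$-plane. But the first radial projection lands on an $(n-1)$-dimensional face, so the projection it factors through is onto an $(n-1)$-plane, not a $d$-plane. When $d<n-1$, the Besicovitch--Federer theorem says nothing about such projections, and in fact the conclusion is false: take $U$ to be a four-corner Cantor set in $\{z=0\}\subset\R^3$ (so $d=1$, $n=3$, $\H^1(U)>0$, $U$ purely $1$-unrectifiable). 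For \emph{every} $2$-plane $H$ not orthogonal to $\{z=0\}$, the orthogonal projection $P_H$ is bi-Lipschitz on $\{z=0\}$ and hence $\H^1(P_H(U))>0$; the same persists after composing with a local diffeomorphism. Thus varying $q$ can never make the first radial projection kill the $\H^d$-measure when $d<n-1$.

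One is then tempted to run the argument iteratively down the skeleta and apply Besicovitch--Federer only at the last stage, when projecting onto $d$-faces. That also fails, because the intermediate radial projections are Lipschitz but not bi-Lipschitz, and (non-injective or merely injective) Lipschitz images of purely $d$-unrectifiable sets need not be purely $d$-unrectifiable; pure unrectifiability can be destroyed along the way, so the hypothesis of Besicovitch--Federer may no longer hold when you reach the $d$-skeleton. This is precisely the issue the paper's proof is organized around: it does \emph{not} use radial projections on $U$ at all, but replaces each Federer--Fleming map $\psi_k$ by a $C^0$-close diffeomorphism $\tilde\psi_k$, so that the image $\tau_d(U)$ of $U$ remains purely $d$-unrectifiable throughout, at the price of only landing in an $\e$-neighborhood of the $d$-skeleton. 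Only then does the paper invoke Besicovitch--Federer, and it does so on $d$-planes: for each $d$-face $F$ it picks a slightly tilted affine $d$-plane $\tilde F$ for which $\Theta_{\tilde F}(\tau_d(U))$ is $\H^d$-null, composes with the near-identity map $\Theta_F$, and finally uses the collapsing map $\rho$ near the $(d-1)$-skeleton to handle the region where $\theta$ cannot be the orthogonal projection. The diffeomorphic replacement, the tilt $\tilde F$, and the collapse $\rho$ are all load-bearing: they exist exactly to compensate for the two failures in your argument.
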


\begin{proof}
	We construct the map \( \phi \) as follows. Choose \( j \) large enough so that \( U \) is contained entirely within the subset of cubes in \( \D_j(Q) \) which do not have any faces on the boundary of \( Q \). Within those cubes, approximate the maps \( \psi_k \), \( d \leq k < n \) defined in the proof of \cite{davidsemmes} Proposition 3.1 by diffeomorphisms \( \tilde{\psi}_k \). For each \( d\leq k< n \), the image of \( U \) by the map \( \tau_k \equiv \tilde{\psi}_k\circ\cdots\circ \tilde{\psi}_{n-1} \) remains purely \( d \)-unrectifiable and \( \tau_k(E) \) will be contained in an open \( \e \)-neighborhood of \( \Delta_{j,k}(Q) \). 
	
	The map \( \phi_1 \) is the composition \( \phi_1\equiv\rho\circ \theta\circ\tau_d \), where the maps \( \rho \) and \( \theta \) are defined below. The map \( \phi \) is defined as in Lemma \ref{lem:FF} to be the concatenation of the straight-line homotopies between the maps \( \psi_n, \tilde{\psi}_{n-1}, \dots, \tilde{\psi}_d, \theta \), and \( \rho \).
	
	By the Besicovitch-Federer projection theorem, for each \( 0<\d<\e \) and each \( d \)-face \( F\in \Delta_{j,d}(Q) \) there exists an affine \( d \)-plane \( \tilde{F} \) such that \( F\subset \cal{N}(\tilde{F},\d) \), and such that the image of \( \phi_1(U) \) by the orthogonal projection \( \Theta_{\tilde{F}}: \R^n \to \tilde{F} \) has zero Hausdorff \( d \)-measure. 
	
	Let \( \zeta>2\e \) and define \( \theta \) on \( \cal{N}(F, \e)\setminus \cal{N}(\p F, \zeta) \) to be the map \( \Theta_F\circ \Theta_{\tilde{F}} \), where \( \Theta_F \) denotes orthogonal projection onto the affine \( d \)-plane defined by \( F \). We may extend \( \theta \) as a Lipschitz map on \( \cal{N}(\Delta_{j,d}(Q), \e)\cap B(\Delta_{j,d-1}(Q),\zeta) \) so that
	\begin{equation}
		\label{lem:ffapprox:a}
		\theta (\cal{N}(\Delta_{j,d}(Q), \e)\cap B(\Delta_{j,d-1}(Q),\zeta)) \subset \Delta_{j,d}(Q)\cap \cal{N}(\Delta_{j,d-1}(Q),2\zeta),
	\end{equation}
	and so that the Lipschitz constant of \( \theta \) depends only on \( n \). Finally, extend \( \theta \) to \( \R^n \) as a Lipschitz map.
	
	The map \( \rho \) is defined on each face \( F\in \Delta_{j,d}(Q) \) as follows: Let \( q \) be the center point of \( F \) and let \( \chi_{q,\zeta}(x) \) denote the point \[ \frac{1}{1-2\sqrt{d}\zeta}(x-q)+q. \] For \( x\in F\setminus \{q\} \), let \( \omega(x) \) denote the point on \( \p F \) and the ray passing through \( x \) and ending at \( q \). For \( x\in F \), let
	\begin{align*}
		\rho(x)=\left\{\begin{array}{lr}
							\chi_{q,\zeta}(x), & \text{if } \chi_{q,\zeta}(x)\in F\\
							\omega(x), & \text{otherwise.}
							\end{array}\right\}
	\end{align*}
	
	Then \( \rho \) is Lipschitz with Lipschitz constant close to 1 (controlled by \( \zeta \).) Finally, extend \( \rho \) to \( \R^n \), with proportional Lipschitz constant. 
	
	Note that by \eqref{lem:ffapprox:a}, \( \H^d \) almost all \( \theta(U) \) is contained in \( \Delta_{j,d}(Q)\cap \cal{N}(\Delta_{j,d-1}(Q), 2\zeta) \), and this region is collapsed onto \( \Delta_{j,d-1}(Q) \) by \( \rho \). Thus, \eqref{lem:ffapprox:1} holds. It is apparent that \eqref{lem:ffapprox:2} holds, as well as Lemma \ref{lem:FF} \ref{lem:ff:item:0}-\ref{lem:ff:item:3}. To see \ref{lem:ff:item:4} and \ref{lem:ff:item:5}, it is enough to observe that \cite{davidsemmes} (3.20) still holds for the modified maps \( \tilde{\psi}_k \). Finally, \ref{lem:ff:item:6} holds since \cite{davidsemmes} (3.33) still holds for the modified maps.
\end{proof}

\begin{lem}[Upper bounds on density ratios]
	\label{lem:upperdensity}
	Suppose \( Y_k\subset \R^n \) for \( k \ge 1\). Fix \( p \in \R^n \), \( 0 < r < R <\i \) and \( 0\leq \eta\leq \i \). Suppose \[ \limsup_{k\to\i} \frac{ \H^m(Y_k(p,R))}{R^m} \le \eta \] and that for each \( \d>0 \) there exists \( M_\d \) such that if \( k \geq M_\d \), then
	\begin{equation}
		\label{eq:xkps}
		\H^m(Y_k(p,s)) < \frac{s}{m} \frac{d}{ds}\H^m(Y_k(p,s)) + \d
	\end{equation}
	for almost every \( s\in [r, R) \) satisfying \( \H^m(Y_k(p,s))/s^m \ge \eta \). Then 
	\[ \limsup_{k\to\i} \frac{ \H^m(Y_k(p,r))}{r^m} \le \eta. \]
\end{lem}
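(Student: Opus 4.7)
The plan is a contradiction argument based on an auxiliary function engineered so that (\ref{eq:xkps}) reads as an explicit monotonicity statement. Writing $g_k(s):=\H^m(Y_k(p,s))$ and $h_k(s):=g_k(s)/s^m$, I note that $g_k$ is non-decreasing and right-continuous (the $B(p,s)$ are closed balls and $\H^m(Y_k(p,R))$ is finite for all $k$ large, by the first hypothesis). Rearranging (\ref{eq:xkps}) yields $g_k'(s) > m(g_k(s)-\delta)/s$ wherever $h_k \geq \eta$ and $g_k$ is classically differentiable.

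Suppose for contradiction that $\limsup_{k\to\infty} h_k(r) > \eta$. Passing to a subsequence, I can fix $\epsilon > 0$ with $h_k(r) > \eta + \epsilon$ for all $k$. Choose $\delta > 0$ so small that $\delta/r^m < \epsilon/2$, and restrict to $k \geq M_\delta$. The auxiliary function I would use is
\[
\beta_k(s) := \frac{g_k(s)-\delta}{s^m} = h_k(s) - \frac{\delta}{s^m},
\]
which at classically differentiable points of $\{h_k \geq \eta\}$ satisfies $\beta_k'(s) > 0$ by the rearrangement above. The first substantive step is to upgrade this pointwise estimate to a genuine monotonicity statement: $\beta_k$ is non-decreasing on any interval $[s_1, s_2] \subset \{h_k \geq \eta\}$.

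The second step rules out any ``drop below $\eta$''. Let $\tau := \inf\{s \in [r, R] : h_k(s) < \eta\}$, with $\tau$ taken to exceed $R$ if the set is empty. Right-continuity of $h_k$ at $r$ together with $h_k(r) > \eta$ forces $\tau > r$, and the monotonicity of $\beta_k$ applied on $[r, \tau)$ gives $h_k(s) = \beta_k(s) + \delta/s^m \geq \beta_k(r) > \eta + \epsilon/2$ throughout $[r, \tau)$. Because $g_k$ is non-decreasing, any jump of $h_k$ can only go up, so $h_k(\tau) \geq \lim_{s \uparrow \tau} h_k(s) \geq \eta + \epsilon/2$; but if $\tau \leq R$ then right-continuity together with the definition of $\tau$ also gives $h_k(\tau) \leq \eta$, a contradiction. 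Hence no such $\tau \leq R$ exists, and the monotonicity of $\beta_k$ on all of $[r,R]$ yields $h_k(R) > \eta + \epsilon/2$ for all sufficiently large $k$ in the subsequence, contradicting $\limsup_k h_k(R) \leq \eta$.

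The step I expect to require the most care is the monotonicity claim for $\beta_k$, since (\ref{eq:xkps}) only controls the classical (almost-everywhere) derivative of $g_k$ and $g_k$ need not be absolutely continuous. My plan is to compute the increment as a Stieltjes integral
\[
\beta_k(s_2) - \beta_k(s_1) = \int_{s_1}^{s_2} s^{-m}\, dg_k(s) - m \int_{s_1}^{s_2} (g_k(s)-\delta) s^{-m-1}\, ds,
\]
split $dg_k$ via the Lebesgue decomposition $g_k = g_k^{ac} + g_k^{sc} + g_k^{j}$, and observe that the singular continuous and jump contributions to $dg_k$ are non-negative and therefore only reinforce the inequality obtained by applying (\ref{eq:xkps}) to $(g_k^{ac})' = g_k'$ almost everywhere on $[s_1, s_2]$.
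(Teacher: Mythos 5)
Your proof is correct and uses essentially the same strategy as the paper: argue by contradiction, rearrange the hypothesis into a differential inequality for the density ratio, integrate it (using that the singular part of $dg_k$ is non-negative because $g_k$ is monotone, so it only reinforces the estimate), and propagate the lower bound $>\eta$ from $r$ out to $R$ to contradict the hypothesis at $R$. Your packaging --- the auxiliary function $\beta_k(s)=(g_k(s)-\delta)/s^m$ that is exactly monotone where $h_k\ge\eta$, and the pointwise first-crossing time $\tau$ handled via right-continuity --- is a clean, correct way to organize the same computation that the paper carries out by integrating $\tfrac{d}{dt}\bigl(\H^m(Y_{k_i}(p,t))/t^m\bigr)$ over the maximal interval $J_i=[r,r_i)$ and concluding $r_i=R$.
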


\begin{proof}
	If \( \eta=\i \) or \( \eta=0 \) there is nothing to prove, so let us assume \( 0<\eta<\i \). Suppose \[ \limsup_{k\to\i} \frac{ \H^m(Y_k(p,r))}{r^m} > \eta. \] Let \( \e > 0 \) and 
	\begin{equation}
		\label{eq:defdelt}
		0 < \d < \min\left\{ \e, \left(\limsup_{k \to \i} \frac{\H^m(Y_k(p,r))}{r^m} - \eta \right) \right\} \frac{r^{m+1}}{mR}.
	\end{equation}

	Let \( k_i \to \i \) with \( k_i>M_\d \),
	\begin{equation}
		\label{eq:defdelt1}
		\left(\frac{\H^m(Y_{k_i}(p,r))}{r^m} - \eta \right) \frac{r^{m+1}}{mR}> \d,
	\end{equation}
 
	\begin{equation}
		\label{eq:defdelt2}
		\H^m(Y_{k_i}(p,r)) \to \limsup_{k \to \i} \H^m(Y_k(p,r)),
	\end{equation}
	and
	\[
		\frac{\H^m(Y_{k_i}(p,r))}{r^m} > \eta.
	\]

	Let \( J_i = [r,r_i) \) be the largest half-open interval with right endpoint \( r_i\in (r,R] \) such that \( \H^m(Y_{k_i}(p,t))/t^m \ge \eta \) for almost every \( t \in J_i \). By \eqref{eq:defdelt1},
	\begin{equation}
		\label{eq:stuff}
		\frac{\H^m(Y_{k_i}(p,r))}{r^m} > \eta + \frac{m}{r^{m+1}} \d(r_i-r).
	\end{equation}
	
	By \eqref{eq:xkps}, we have 
	\begin{align*}
		 \frac{d}{dt} \frac{\H^m(Y_{k_i}(p,t))}{t^m} = \frac{t \frac{d}{dt}(\H^m(Y_{k_i}(p,t))) - m \H^m(Y_{k_i}(p,t)))}{t^{m+1}} > -\frac{m \d} {t^{m+1}} \geq - \frac{m\d}{r^{m+1}}
	\end{align*}
	for almost every \( t\in J_i \).

	Integrating yields,
	\begin{equation}
		\label{eq:stuff2}
		-\frac {m\d(r_i-r)}{r^{m+1}} < \int_r^{r_i} \frac{d}{dt}\left(\frac{\H^m(Y_{k_i}(p,t))}{t^m}\right) dt \le \frac{\H^m(Y_{k_i}(p,r_i))}{r_i^m} - \frac{\H^m(Y_{k_i}(p,r))}{r^m}
	\end{equation}
	 
	Combining \eqref{eq:stuff} and \eqref{eq:stuff2} yields
	\begin{equation*}
		\frac{\H^m(Y_{k_i}(p,r_i))}{r_i^m}> \eta.
	\end{equation*}

	It follows that \( r_i = R \). Since \( \d \leq \e r^{m+1}/(m R) \), \eqref{eq:stuff2} implies
	\begin{equation}
		\frac{\H^m(Y_{k_i}(p,r))}{r^m} < \frac{\H^m(Y_{k_i}(p,R))}{R^m} + \e.
	\end{equation}
	
	Letting \( i\to \i \) and then \( \e\to 0 \) we deduce from \eqref{eq:defdelt2} that \[ \limsup_{k \to \i}\frac{\H^m(Y_{k_i}(p,r))}{r^m} \le \eta, \] a contradiction.
\end{proof}

For \( E\in \Gr(m,n) \), \( p\in \R^n \) and \( 0<\e<1 \), let \( \cal{C}(p,E,\e) = \{q \in \R^n: d_H(q-p, E) < \e d_H(p,q) \} \).

\begin{lem}
	\label{lem:approx}
	Let \( X\subset \R^n \), \( p\in X \), and suppose there exists \( s > 0 \) such that \[ \inf\left\{ \frac{\H^m(X(q,r))}{r^m}: q\in X, B(q,r)\subset B(p,s) \right\}>0. \] If \( E\in \Gr(m,n) \) is an approximate tangent \( m \)-plane for \( X \) at \( p \), then for every \( 0<\e<1 \) there exists \( r>0 \) such that \( X(p,r)\setminus \overline{\cal{C}(p,E,\e)}=\emptyset \).
\end{lem}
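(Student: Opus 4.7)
I will argue by contradiction. Suppose the conclusion fails for some $0<\e<1$: then there is a sequence of points $q_k\in X$ with $q_k\to p$ such that $q_k\notin \overline{\cal{C}(p,E,\e)}$, i.e. $d(q_k-p,E)>\e\|q_k-p\|=:\e r_k$ where $r_k:=\|q_k-p\|\to 0$. The idea is to produce a lower bound on the $\H^m$ measure of $X$ near $p$ that lies outside a slightly narrower cone around $E$, and then contradict the defining property of an approximate tangent plane.

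First, fix a density constant $c_0>0$ such that $\H^m(X(q,r))\ge c_0 r^m$ whenever $B(q,r)\subset B(p,s)$; this is given by hypothesis. Set $\rho_k:=\e r_k/2$ and note that for all $k$ large enough, $B(q_k,\rho_k)\subset B(p,s)$, so $\H^m(X(q_k,\rho_k))\ge c_0 \rho_k^m = c_0(\e/2)^m r_k^m$. Next I would verify that $X(q_k,\rho_k)$ lies entirely outside a slightly smaller cone $\cal{C}(p,E,\e')$ where $\e':=\e/(2+\e)$: for any $q\in B(q_k,\rho_k)$, the triangle inequality gives
\[
d(q-p,E)\ge d(q_k-p,E)-\|q-q_k\|>\e r_k-\rho_k=\e r_k/2,
\qquad \|q-p\|\le r_k+\rho_k=(1+\e/2)r_k,
\]
so $d(q-p,E)/\|q-p\|>\e'$, confirming $q\notin \cal{C}(p,E,\e')$.

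Now set $R_k:=(1+\e/2)r_k$, so $X(q_k,\rho_k)\subset X(p,R_k)\setminus \cal{C}(p,E,\e')$. Combining the density lower bound with the containment yields
\[
\frac{\H^m\bigl(X(p,R_k)\setminus \cal{C}(p,E,\e')\bigr)}{R_k^m}\ge \frac{c_0(\e/2)^m r_k^m}{(1+\e/2)^m r_k^m}=:\beta>0,
\]
a positive constant independent of $k$. On the other hand, since $E$ is an approximate tangent $m$-plane for $X$ at $p$, the standard definition (e.g.\ \cite{mattila} Ch.~15) forces
\[
\lim_{r\to 0^+}\frac{\H^m\bigl(X(p,r)\setminus \cal{C}(p,E,\e')\bigr)}{r^m}=0.
\]
Since $R_k\to 0$, this directly contradicts the lower bound $\beta>0$ above, completing the proof.

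The routine-looking part is the triangle-inequality verification that $B(q_k,\rho_k)$ avoids the narrower cone, which determines the correct choice of $\e'$ in terms of $\e$; the only thing one has to be careful about is keeping the ambient ball $B(q_k,\rho_k)$ inside $B(p,s)$ so that the hypothesized uniform density bound applies, but this is automatic once $k$ is large. The main conceptual step is simply the observation that any point that escapes the cone at scale $r_k$ would, by the lower density bound, drag a definite amount of $\H^m$-mass with it into the cone's complement at comparable scale, which is incompatible with the approximate tangent property.
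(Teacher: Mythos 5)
Your proof is correct and follows essentially the same argument as the paper: extract a sequence of escaping points, use the hypothesized lower density bound to locate a definite amount of $\H^m$-mass outside a slightly narrower cone at comparable scale, and contradict the approximate-tangent-plane property. The only cosmetic difference is the choice of constants (you use $\rho_k=\e r_k/2$ and the cone parameter $\e'=\e/(2+\e)$, whereas the paper uses a ball of radius $\e s_i/4$ with $s_i=2d_H(q_i,p)$ and cone parameter $\e/4$), which does not affect the argument.
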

		
\begin{proof} 
	If the result is false, then for some \( 0<\e<1 \) there exist sequences \( r_i \to 0 \) and \( q_i \in X(p,r_i) \setminus \overline{\cal{C}(p,E,\e)} \). Let \( s_i = 2d_H(q_i,p) \). Then \[ B(q_i,\e s_i/4) \subset B(p,s_i) \setminus \cal{C}(p,E,\e/4) \] and thus
	\[
		\frac{\H^m(X(p,s_i)\setminus \cal{C}(p,E,\e/4))}{s_i^m } \ge \frac{\H^m(X(q_i,\e s_i/4))}{s_i^m},
	\]
the right hand side of which for large enough \( i \) is bounded below by
\[
	 (\e/4)^m \inf \left\{\frac{\H^m(X(q,r))}{r^m}: q\in X, B(q,r)\subset B(p,s) \right\} > 0,
\] 
a contradiction.
\end{proof} 

\section{Minimizing sequences}\label{minseq}
 
	\begin{defn}\label{def:rregular}
		We say that a sequence \( \{X_k\}_{k\in \N} \) is \emph{\textbf{Reifenberg regular in \( A^c \)}} if there exist \( 0<\mathbf{c}<\i \) and \( 0<\mathbf{R}\leq \i \) such that if \( k\geq 1 \), \( p\in X_k \), \( 2^{-k}<r< \mathbf{R} \) and \( B(p,r) \) is disjoint from \( A \), then \[ \frac{\H^m(X_k(p,r))}{r^m}\geq \mathbf{c}. \]
	\end{defn}
		
	We shall make the following assumptions for the remainder of this paper: Assume \( C \) is a compact uniform localizable Lipschitz neighborhood retract, that \( A\subset C \) is closed and that \( \F^m \) is elliptic. Fix an embedding \( C\hookrightarrow \R^n \) as a uniform localizable Lipschitz neighborhood retract and let \( \pi: U\to C \) be a Lipschitz retraction of an open neighborhood \( U \) of \( C \). Suppose \( \Span(C,A) \) is a non-empty spanning collection and suppose \( \{X_k\}_{k\in \N} \subset \Span(C,A)\) satisfies:

	\begin{enumerate}
		\item \( \F^m(X_k\setminus A) \to \frak{m} := \inf\{\F^m(Y\setminus A):Y \in \Span(C,A) \} \); 
		\item The measures \( \F^m\lfloor_{X_k\setminus A} \) converge weakly to a finite Borel measure \( \mu_0 \);
		\item \( X_k\to X_0 \equiv \supp(\mu_0)\cup A \) in the Hausdorff metric;
		\item \( \{X_k\} \) is Reifenberg regular.
	\end{enumerate}
	
	It was shown in \cite{lipschitz} that there exists such a sequence \( \{X_k\} \), as long as \( \Span(C,A) \) is defined using an algebraic spanning condition with \( L_3=L_4=\emptyset \). For the existence of such a sequence to hold in the full generality of axiomatic spanning conditions, it suffices to show\footnote{See \cite{lipschitz} Lemma 4.2.3 for details.} that if \( X\in \Span(C,A) \), \( p\in C\setminus A \), and \( r>0 \) is small enough so that \( B(p,r)\cap A=\emptyset \), then \( X\setminus B(p,r)\cup \pi(Y) \) contains an element of \( \Span(C,A) \), where \( Y \) is the set generated in \cite{reifenberg} Lemma 8 from the set \( X\cap \fr(p,r) \). That this is indeed the case is shown in \cite{isoperimetric}. So, let us assume we have such a sequence \( \{X_k\}. \) By \cite{lipschitz} Corollary 4.3.5, 
	\begin{equation}
		\label{eq:lowerbound}
		\frac{\mu_0(B(p,r))}{r^m}\geq a\mathbf{c}>0
	\end{equation}
	for all \( p\in X_0\setminus A \) and \( 0<r<\min\{ d_H(p,A),\mathbf{R} \} \). In particular, there is a uniform lower bound on lower density: \( {\Theta_*}^m(\mu_0,p)\geq a\mathbf{c}\a_m^{-1}>0 \). We now show there is an upper bound for \( \frac{\mu_0(B(p,r))}{r^m} \), uniform away from \( A \).
	
	\begin{defn}
		\label{def:dppp}
		For \( p \in A^c \) let \( d_p=\min\{ d_H(p,A\cup U^c),\mathbf{R}, \xi_p \} \). Let \( D_p \) be the subset of \( (0,d_p) \) consisting of numbers \( r \) such that the following conditions hold for all \( s \in \{ qr: q\in (0,1]\cap \mathbb{Q} \} \) and \( k\geq 1 \):
		\begin{enumerate}
			\item\label{def:dppp:item:1} \( \mu_0(\fr B(p,s)) = 0 \),
			\item\label{def:dppp:item:0} \( x_k(p,s) \) is \( (m-1) \)-rectifiable,
			\item\label{def:dppp:item:2} \( \H^{m-1}(x_k(p,s))<\i \),
			\item\label{def:dppp:item:3} \( s\mapsto \H^m(X_k(p,s)) \) is differentiable at \( s \), and
			\item\label{def:dppp:item:4} \[ \lim_{h\to 0}\frac{1}{h}\int_s^{s+h}\H^{m-1}(x_k(p,t))dt = \H^{m-1}(x_k(p,s)). \]
			\end{enumerate}
	\end{defn}

	\begin{lem}
		\label{lem:prime}
		\( D_p \) is a full Lebesgue measure subset of \( (0,d_p) \).
	\end{lem}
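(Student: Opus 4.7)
The plan is to verify that, for each $k \geq 1$ separately, each of the five conditions in Definition \ref{def:dppp} fails only for $s$ in a Lebesgue-null subset of $(0,d_p)$, and then to combine these via a countable-union / rational-scaling argument.

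For condition \ref{def:dppp:item:1}, the spheres $\fr B(p,s)$ for distinct $s$ are pairwise disjoint, so since $\mu_0$ is a finite Borel measure, at most countably many of these spheres can have positive $\mu_0$-measure. For condition \ref{def:dppp:item:3}, the function $s\mapsto \H^m(X_k(p,s))$ is monotone non-decreasing in $s$ and therefore differentiable at almost every $s$. For conditions \ref{def:dppp:item:0} and \ref{def:dppp:item:2}, I would invoke the standard slicing theorem for $\H^m$-measurable $m$-rectifiable sets with finite $\H^m$-measure: slicing by the Lipschitz function $q\mapsto|q-p|$ yields a coarea-type inequality
\[
\int_0^{d_p}\H^{m-1}(x_k(p,t))\,dt \le c_n\,\H^m(X_k\setminus A)<\i,
\]
whose finiteness forces $\H^{m-1}(x_k(p,s))<\i$ for a.e.\ $s$, and the rectifiable slicing theorem ensures that $x_k(p,s)$ is $(m-1)$-rectifiable for a.e.\ $s$. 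For condition \ref{def:dppp:item:4}, the previous display shows that $t\mapsto \H^{m-1}(x_k(p,t))$ is locally integrable on $(0,d_p)$, so the Lebesgue differentiation theorem yields the limit identity at a.e.\ $s$.

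Fix, for each $k\ge 1$, a Lebesgue-null set $N_k\subset(0,d_p)$ off of which all five conditions hold, and set $N=\bigcup_{k\ge 1} N_k$. A countable union of null sets is null, so $|N|=0$. Now observe that by construction
\[
(0,d_p)\setminus D_p \;=\; \bigl\{ r\in(0,d_p): qr\in N \text{ for some } q\in(0,1]\cap\mathbb{Q}\bigr\} \;=\; \bigcup_{q\in(0,1]\cap\mathbb{Q}} q^{-1}N.
\]
Each dilate $q^{-1}N$ is a Lebesgue-null set (Lebesgue measure scales by $q^{-1}$ under the dilation $x\mapsto q^{-1}x$, and hence sends null sets to null sets), and the union is over a countable set of rationals, so $(0,d_p)\setminus D_p$ is Lebesgue-null. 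This gives the claim.

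There is no serious obstacle; the only point that requires care is ensuring that conditions \ref{def:dppp:item:0} and \ref{def:dppp:item:2} hold at a.e.\ radius, which relies on the standard coarea/slicing machinery for rectifiable sets of finite $\H^m$-measure, applied to the Lipschitz distance function $q\mapsto |q-p|$ on the $m$-rectifiable set $X_k\setminus A$. The rational-scaling trick is what allows the passage from ``a.e.\ radius works'' to ``a.e.\ $r$ admits all rational sub-radii $qr$ working simultaneously.''
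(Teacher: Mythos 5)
Your proof is correct and follows essentially the same route as the paper's: each of the five conditions is shown to fail only on a Lebesgue-null set of radii, and then the countable union over $k$ and the rational dilates $q^{-1}N$ is invoked. You supply two details the paper's proof elides: the handling of condition \ref{def:dppp:item:1} via the finiteness of $\mu_0$ (the paper silently omits it), and the explicit rational-scaling argument showing that null sets are preserved under dilation by $q^{-1}$, which is precisely what justifies passing from ``a.e.\ radius works'' to ``a.e.\ $r$ has all rational sub-radii working simultaneously.''
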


	\begin{proof} 
		Part \ref{def:dppp:item:0} determines a full Lebesgue measure set since \( X_k \) is \( m \)-rectifiable. Part \ref{def:dppp:item:2} follows from \cite{lipschitz} Lemma 2.0.1. Part \ref{def:dppp:item:3} follows since \( \H^m(X_k(p,s)) \) is monotone non-decreasing. Part \ref{def:dppp:item:4} follows from the Lebesgue Differentiation Theorem.
	\end{proof}

	\begin{lem}
		\label{lem:ldt}
		If \( r \in D_p \), then \( \H^{m-1}(x_k(p,r)) \le \frac{d}{dr}\H^m(X_k(p,r)) \) for all \( k\geq 1 \). 
	\end{lem}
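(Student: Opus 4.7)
The plan is to derive this as a standard consequence of the coarea inequality (Eilenberg's inequality) applied to the $1$-Lipschitz distance function $q \mapsto d(q,p)$, combined with the defining properties of $D_p$.

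The key observation is that for any $0 < s < t < d_p$, slicing the $m$-rectifiable set $X_k$ by the distance function from $p$ (which has Lipschitz constant $1$) yields
\begin{equation*}
    \int_s^t \H^{m-1}(x_k(p,\tau))\, d\tau \;\leq\; \H^m(X_k(p,t)) - \H^m(X_k(p,s)).
\end{equation*}
This is the standard coarea inequality for Hausdorff measures (see, e.g., \cite{mattila}); the integrand is measurable by condition \ref{def:dppp:item:0} of Definition \ref{def:dppp}, which ensures $(m-1)$-rectifiability of the spheres-slices.

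Next, I would specialize to $s = r$ and $t = r+h$ with $h \to 0^+$. Dividing by $h$ gives
\begin{equation*}
    \frac{1}{h}\int_r^{r+h} \H^{m-1}(x_k(p,\tau))\, d\tau \;\leq\; \frac{\H^m(X_k(p,r+h)) - \H^m(X_k(p,r))}{h}.
\end{equation*}
By condition \ref{def:dppp:item:4} of Definition \ref{def:dppp}, the left-hand side converges to $\H^{m-1}(x_k(p,r))$ as $h \to 0^+$. By condition \ref{def:dppp:item:3}, the right-hand side converges to $\tfrac{d}{dr}\H^m(X_k(p,r))$. Taking the limit yields the desired inequality.

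The argument is almost entirely routine; the one point worth verifying is that the coarea inequality applies to $X_k$ with the required form, which follows since $X_k$ is $m$-rectifiable (hence so is $X_k(p,r)$) and the distance function is $1$-Lipschitz. No obstacle of real substance arises — the role of Definition \ref{def:dppp} is precisely to guarantee that both sides of the inequality are well-defined and behave as honest derivatives/integrands at the chosen radius $r$.
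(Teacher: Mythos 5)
Your proposal is correct and follows essentially the same route as the paper: both apply the coarea (Eilenberg) inequality for the $1$-Lipschitz distance function to bound $\int_r^{r+h}\H^{m-1}(x_k(p,t))\,dt$ by $\H^m(X_k(p,r+h))-\H^m(X_k(p,r))$, then invoke conditions \ref{def:dppp:item:4} and \ref{def:dppp:item:3} of Definition \ref{def:dppp} to pass to the limit as $h\to 0^+$. The paper simply cites its own prior Lemma 2.0.1 of \cite{lipschitz} for the sharp (constant $1$) coarea inequality on rectifiable sets where you cite the general form in \cite{mattila}; otherwise the arguments are identical.
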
 

	\begin{proof}
		By \cite{lipschitz} Lemma 2.0.1 and Definition \ref{def:dppp},
		\begin{align*}
			 \H^{m-1}(x_k(p,r)) &=\lim_{h\to 0} \frac{\int_r^{r+h} \H^{m-1}(x_k(p,t))\,dt}{h}\\ 
			 					& \le \lim_{h\to 0} \frac{\H^m(X_k(p,r+h)\setminus X_k(p,r))}{h}\\
								& = \frac{d}{dr}\H^m(X_k(p,r))
		\end{align*}
	\end{proof}
		 
	Let \( M<\i \) be an upper bound for \( \{\F^m(X_k\setminus A) : k\geq 1\} \).
	
	\begin{lem}
		\label{lem:min}
		Let \( p \in X_0 \setminus A \) and \( 0<r\leq d_p \). For each \( \d > 0 \) there exists \( N_{p,r,\d} > 1 \) such that if \( k\geq N_{p,r,\d} \), and \( Y_k\in \Span(C,A) \) satisfies
		\begin{equation}
			\label{eq:lemmin1}
			Y_k=(X_k\cap \cN(p,r)^c) \cup Z_k
		\end{equation}
		for some \( \H^m \) measurable \( m \)-rectifiable set \( Z_k\subset C \), then
		\begin{equation}
			\label{eq:eqlemmin}
			\F^m(X_k \cap \cN(p,r)) < (1+\d)\F^m(Z_k \setminus A).
		\end{equation}
		
		If, in addition \( 0<s<r \), \( \d\leq 1 \) and \( Z_k\setminus Z_k(p,s)=X_k\cap \cN(p,r)\setminus X_k(p,s) \), then
		\begin{equation}
			\label{eq:HmXk}
			\F^m(X_k(p,s)) \le 2 \F^m(Y_k(p,s)) + \d M.
		\end{equation}

	\end{lem}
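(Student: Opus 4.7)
The plan is to combine the near-minimality of $\{X_k\}$ against the competitor $Y_k$ with a uniform positive lower bound on $\F^m(X_k \cap \cN(p,r))$, and thereby upgrade an additive error of order $\e_k := \F^m(X_k \setminus A) - \frak{m} \to 0$ into the multiplicative factor $(1+\delta)$. Since $r \le d_p \le d_H(p,A)$, the ball $\cN(p,r)$ is disjoint from $A$, so $X_k \cap \cN(p,r) = (X_k \setminus A) \cap \cN(p,r)$. Because $p \in X_0 \setminus A = \supp(\mu_0)$, the number $\beta := \mu_0(\cN(p,r))$ is strictly positive, and weak convergence of $\F^m\lfloor_{X_k \setminus A}$ to $\mu_0$ yields $\liminf_k \F^m(X_k \cap \cN(p,r)) \ge \beta$. (One could alternatively invoke Reifenberg regularity at a point $p_k \in X_k$ with $p_k \to p$, using \eqref{eq:lowerbound}.) This is the only substantive step: converting the additive error $\e_k$ into a multiplicative one is impossible without such a lower bound, and the rest is bookkeeping.

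For \eqref{eq:eqlemmin}, from $Y_k \in \Span(C,A)$ and the inclusion
\[
Y_k \setminus A \subseteq \bigl((X_k \setminus A) \setminus \cN(p,r)\bigr) \cup (Z_k \setminus A),
\]
subadditivity of $\F^m$ together with $\F^m(Y_k \setminus A) \ge \frak{m}$ rearranges to
\[
\F^m(X_k \cap \cN(p,r)) \le \F^m(Z_k \setminus A) + \e_k.
\]
I would then choose $N_{p,r,\delta}$ large enough that, for all $k \ge N_{p,r,\delta}$, one has $\F^m(X_k \cap \cN(p,r)) \ge \beta/2$ and $\e_k < \delta\beta/4$. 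This forces $\F^m(Z_k \setminus A) \ge \beta/4$, hence $\e_k < \delta\,\F^m(Z_k \setminus A)$, which yields the strict inequality \eqref{eq:eqlemmin}.

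For \eqref{eq:HmXk}, the hypothesis $Z_k \setminus Z_k(p,s) = X_k \cap \cN(p,r) \setminus X_k(p,s)$ together with $s<r$ gives $Y_k(p,s) = Z_k(p,s)$ as well as the two disjoint decompositions $\F^m(Z_k \setminus A) = \F^m(Y_k(p,s)) + \F^m(X_k \cap (\cN(p,r) \setminus B(p,s)))$ and $\F^m(X_k \cap \cN(p,r)) = \F^m(X_k(p,s)) + \F^m(X_k \cap (\cN(p,r) \setminus B(p,s)))$. Substituting both into \eqref{eq:eqlemmin} and cancelling the common summand produces
\[
\F^m(X_k(p,s)) \le (1+\delta)\F^m(Y_k(p,s)) + \delta \F^m(X_k \cap (\cN(p,r) \setminus B(p,s))),
\]
whereupon $(1+\delta) \le 2$ (from $\delta \le 1$) and $\F^m(X_k \cap (\cN(p,r) \setminus B(p,s))) \le \F^m(X_k \setminus A) \le M$ deliver \eqref{eq:HmXk}.
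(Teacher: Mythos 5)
Your argument follows the same basic route as the paper's: use near-minimality of $X_k$ against the competitor $Y_k$ to get an additive error $\e_k = \F^m(X_k\setminus A) - \frak{m}$, then upgrade it to a multiplicative factor via a uniform positive lower bound on $\F^m(X_k\cap\cN(p,r))$. The paper runs this as a contradiction using Reifenberg regularity (via $\H^m(X_{k_i}(p,r/2))\ge\mathbf{c}(r/2)^m$), whereas you run it directly via weak convergence of $\F^m\lfloor_{X_k\setminus A}\to\mu_0$, Portmanteau on the open set $\cN(p,r)$, and $p\in\supp(\mu_0)$; you also note the Reifenberg alternative. The second inequality \eqref{eq:HmXk} you derive exactly as the paper does (using $Z_k(p,s)=Y_k(p,s)$ and disjoint decompositions of both sides of \eqref{eq:eqlemmin}). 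So the proposal is correct in essence and not meaningfully different in substance.

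One small slip worth flagging: after choosing $N$ so that $\F^m(X_k\cap\cN(p,r))\ge\beta/2$ and $\e_k<\delta\beta/4$, you assert that this forces $\F^m(Z_k\setminus A)\ge\beta/4$. That inference requires $\e_k\le\beta/4$, which does \emph{not} follow from $\e_k<\delta\beta/4$ when $\delta>1$ (the statement permits arbitrary $\delta>0$). The fix is trivial since $\e_k\to 0$: require $\e_k<\min\{1,\delta\}\,\beta/4$, so that both $\F^m(Z_k\setminus A)\ge\beta/4$ and $\e_k<\delta\,\F^m(Z_k\setminus A)$ hold.
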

	
	\begin{proof}
		If \eqref{eq:eqlemmin} fails, there exist \( k_i \to \i \) and \( Y_{k_i}\in \Span(C,A) \) satisfying \eqref{eq:lemmin1} such that \[ \F^m(Z_{k_i}\setminus A) \leq \frac{\F^m(X_{k_i}\cap \cN(p,r)}{(1+\d)}. \] It follows from \cite{lipschitz} Proposition 4.3.2 that
		\begin{align*}
			\liminf_{i\to \i}\{\F^m(Y_{k_i}\setminus A) \} &\le \liminf_{i\to \i}\{\F^m(Z_{k_i}\setminus A) + \F^m((X_{k_i}\cap \cN(p,r)^c)\setminus A) \} \\
			&\le \liminf_{i\to \i}\{\F^m(X_{k_i}\setminus A) - \frac{\d}{1+\d}\,\F^m(X_{k_i}\cap \cN(p,r)) \} \\
			&\le \liminf_{i\to \i}\{\F^m(X_{k_i}\setminus A) - \frac{a \d}{1+\d}\, \H^m(X_{k_i}(p,r/2)) \} \\
			&\le \liminf_{i\to \i}\{\F^m(X_{k_i}\setminus A)\} - \frac{a \d}{1+\d}\, \mathbf{c} (r/2)^m \\
			&< \frak{m},
		\end{align*}
		a contradiction.

		By \eqref{eq:eqlemmin},
		\begin{equation}
		 	\F^m(X_k(p,s)) + \F^m(X_k\cap \cN(p,r)\setminus X_k(p,s)) \le (1+\d)\F^m(Z_k(p,s)) +(1+\d)\F^m(Z_k\setminus Z_k(p,s)),
		\end{equation}
		and thus
		\begin{equation}
			\F^m(X_k(p,s)) \le (1+\d)\F^m(Z_k(p,s)) + \d \F^m(Z_k\setminus Z_k(p,s)) \le (1+\d)\F^m(Z_k(p,s))+ \d \F^m(X_k\setminus A).
		\end{equation}
	\end{proof}

	Let \( K = 2b/a \) and \( \e_0 = 1/(2c_1mK) \). Let \( \g_0 \) denote the constant \( \text{``\(\gamma\)''} \) produced from Lemma \ref{cor:deformation} corresponding to \( n \), \( C \) and \( \e_0 \).

	\begin{lem}
		\label{lem:muk}
		Let \( p \in X_0 \setminus A \) and \( 0<r\leq d_p \). If \( s\in D_p\cap (0,r) \), \( \d\leq 1 \), and \( k \ge N_{p,r,\d} \), then
		\begin{equation}
			\label{eq:muk1}
			\H^m(X_k(p,s)) \le K\g_0s^m + \frac{s}{2m} \frac{d}{ds}\H^m(X_k(p,s)) + \d M/a.
		\end{equation}
		If in addition
		\begin{equation}
			\label{eq:muk2}
			\frac{\H^m(X_k(p,s))}{s^m} \ge 2K\g_0,
		\end{equation}
		then
		\begin{equation}
			\label{eq:theimportanteq}
			\H^m(X_k(p,s)) \le \frac{s}{m} \frac{d}{ds}\H^m(X_k(p,s)) + 2\d M/a.
		\end{equation}
	\end{lem}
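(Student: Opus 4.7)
The plan is to use Lemma \ref{cor:deformation} to build a competitor $Y_k$ that agrees with $X_k$ outside $B(p,s)$, apply the near-optimality inequality of Lemma \ref{lem:min} to compare $\F^m(X_k(p,s))$ with $\F^m(Y_k(p,s))$, and then convert the resulting bounds into the desired differential inequality using Lemma \ref{lem:ldt}.

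More precisely, I would fix $s \in D_p \cap (0,r)$ and apply Lemma \ref{cor:deformation} at the point $p$ with radius $s$ and parameter $\e=\e_0$. This produces sets $P_{\e_0}, T_{\e_0}\subset C(p,s)$ with
\[
\H^m(P_{\e_0})\le \g_0 s^m, \qquad \H^m(T_{\e_0})\le c_1\e_0 s\,\H^{m-1}(x_k(p,s)),
\]
and with the property that $Y_k := (P_{\e_0}\cup T_{\e_0}\cup(X_k\setminus B(p,s)))^\dagger$ lies in $\Span(C,A)$. Setting $Z_k = P_{\e_0}\cup T_{\e_0}\cup(X_k\cap\cN(p,r)\setminus B(p,s))$ so that $Z_k\setminus Z_k(p,s)= X_k\cap\cN(p,r)\setminus X_k(p,s)$, the hypotheses of Lemma \ref{lem:min} are satisfied, yielding (for $k\ge N_{p,r,\d}$)
\[
\F^m(X_k(p,s))\le 2\F^m(Y_k(p,s))+\d M.
\]

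Next I would bound the right-hand side. Since $Y_k(p,s)\subset P_{\e_0}\cup T_{\e_0}$ and $f\le b$, we have $\F^m(Y_k(p,s))\le b\g_0 s^m + bc_1\e_0 s\,\H^{m-1}(x_k(p,s))$. Using Lemma \ref{lem:ldt} to replace $\H^{m-1}(x_k(p,s))$ by $\tfrac{d}{ds}\H^m(X_k(p,s))$, and the lower bound $\F^m(X_k(p,s))\ge a\H^m(X_k(p,s))$, dividing through by $a$ gives
\[
\H^m(X_k(p,s)) \le \frac{2b\g_0}{a}s^m + \frac{2bc_1\e_0}{a}s\frac{d}{ds}\H^m(X_k(p,s)) + \d M/a.
\]
With the choices $K=2b/a$ and $\e_0=1/(2c_1mK)$, the coefficients become exactly $K\g_0$ and $\tfrac{1}{2m}$, delivering \eqref{eq:muk1}.

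For \eqref{eq:theimportanteq}, the assumption $\H^m(X_k(p,s))/s^m\ge 2K\g_0$ means $K\g_0 s^m\le \tfrac{1}{2}\H^m(X_k(p,s))$, so this term can be absorbed into the left-hand side of \eqref{eq:muk1}; rearranging and multiplying by $2$ yields \eqref{eq:theimportanteq}. The only delicate step is the application of Lemma \ref{cor:deformation}, which requires checking that $s$ lies below the retraction radius at $p$ and that the relevant ball avoids $A$; both follow from $s<d_p$ together with the definition of $d_p$ (and, for the $\sqrt{n}$ factor in the cube used in the Federer--Fleming construction, from shrinking $r$ inside $d_p$ if necessary). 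All other ingredients are routine given the constructions already established.
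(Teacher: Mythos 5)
Your proof is correct and follows essentially the same route as the paper: build a cone-type competitor via Lemma~\ref{cor:deformation} with radius $s$ and $\e=\e_0$, plug it into the near-optimality estimate \eqref{eq:HmXk} of Lemma~\ref{lem:min}, convert $\F^m$ to $\H^m$ via the density bounds $a\le f\le b$, replace $\H^{m-1}(x_k(p,s))$ using Lemma~\ref{lem:ldt}, and check that the choices $K=2b/a$ and $\e_0=1/(2c_1mK)$ produce the stated coefficients, with the second inequality following by absorbing $K\g_0 s^m$ under hypothesis~\eqref{eq:muk2}. You are also right to flag the $\sqrt{n}$ margin needed to invoke Lemma~\ref{cor:deformation}, a point the paper glosses over.
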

	
	\begin{proof}
		Let \( \hat{X}_k \) denote the set \( P_k \cup T_k \cup (X_k \setminus B(p,r)) \), where \( P_k \) and \( T_k \) are the sets produced from Lemma \ref{cor:deformation} applied to \( \text{``\( X \)''}=X_k \), \( \text{``\( r \)''}=s \) and \( \text{``\( \e \)''}=\e_0 \). Lemma \ref{cor:deformation} and Lemmas \ref{lem:min} and \ref{lem:ldt} then yield
		\begin{align}
			\label{eq:min}
			\H^m(X_k(p,s)) &\le K \H^m(\hat{X}_k(p,s)) + \d M/a\\
			&\le K(\g_0 s^m + c_1 \e_0 s \H^{m-1}(x(p,s))) + \d M/a\\
			&\le K\g_0s^m + \frac{s}{2m} \frac{d}{ds}\H^m(X_k(p,s)) + \d M/a.
		\end{align}
		The second assertion follows from algebraic manipulation of \eqref{eq:muk1}. 
	\end{proof} 

	Let 
	\begin{equation}
		\label{eq:c(p)}
		c(p)=\max \{ M/(a d_p^m), 2K\g_0 \}.
	\end{equation}
	
	\begin{thm}
		\label{thm:upperdensity}
		Let \( p \in X_0 \setminus A \) and \( 0<r< d_p \). Then \[ \limsup_{k \to \i}\frac{\H^m(X_k(p,r))}{r^m}\leq c(p). \] In particular,
		\begin{equation}
			\label{eq:upperbound}
			\frac{\mu_0(B(p,r)) }{r^m}\leq b c(p) 
		\end{equation}
	\end{thm}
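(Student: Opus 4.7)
The plan is to apply Lemma \ref{lem:upperdensity} to the sequence $Y_k := X_k$ with the choice $\eta := c(p)$ and $R := d_p$, and then to recover \eqref{eq:upperbound} from the upper density estimate via weak convergence. The Lemma has two hypotheses: an initial density bound at scale $R$, and the differential inequality \eqref{eq:xkps} at almost every intermediate scale $s$ for which the density is at least $\eta$. Both will come essentially for free from the machinery already set up in Lemma \ref{lem:muk}.

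For the initial bound, observe that $B(p,d_p)$ is disjoint from $A$, so $X_k(p,d_p)\subset X_k\setminus A$, and since $\F^m\geq a\H^m$,
\[
\frac{\H^m(X_k(p,d_p))}{d_p^m} \le \frac{\F^m(X_k\setminus A)}{a\,d_p^m} \le \frac{M}{a\,d_p^m} \le c(p).
\]
Thus $\limsup_k \H^m(X_k(p,R))/R^m \le \eta$, as required. For the differential inequality, fix $\d>0$, set $\d':=\min\{1,a\d/(2M)\}$, and take $M_\d := N_{p,d_p,\d'}$ from Lemma \ref{lem:min}. For each $k\geq M_\d$ and each $s\in D_p\cap(0,d_p)$ with $\H^m(X_k(p,s))/s^m\geq \eta$, the hypothesis of \eqref{eq:theimportanteq} in Lemma \ref{lem:muk} is satisfied (since $\eta\geq 2K\gamma_0$), giving
\[
\H^m(X_k(p,s)) \le \frac{s}{m}\frac{d}{ds}\H^m(X_k(p,s)) + 2\d' M/a \le \frac{s}{m}\frac{d}{ds}\H^m(X_k(p,s)) + \d.
\]
By Lemma \ref{lem:prime}, $D_p$ has full Lebesgue measure in $(0,d_p)$, so this holds for almost every $s\in[r,R)$ satisfying the density hypothesis. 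Lemma \ref{lem:upperdensity} now yields the stated upper density bound $\limsup_k\H^m(X_k(p,r))/r^m\leq c(p)$.

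Finally, for \eqref{eq:upperbound}, let $r<r'<d_p$. Since $B(p,r)\subset \cN(p,r')$ and $\cN(p,r')$ is open, the weak convergence $\F^m\lfloor_{X_k\setminus A}\to \mu_0$ together with the Portmanteau lower-semicontinuity gives
\[
\mu_0(B(p,r)) \le \mu_0(\cN(p,r')) \le \liminf_{k\to\i} \F^m(X_k\cap \cN(p,r')\setminus A) \le b\,\limsup_{k\to\i} \H^m(X_k(p,r')) \le b\,c(p)(r')^m.
\]
Letting $r'\downarrow r$ yields $\mu_0(B(p,r))\le b\,c(p)\,r^m$. The main ``obstacle'' is really only bookkeeping: matching the $\d$'s between Lemmas \ref{lem:min}, \ref{lem:muk}, and \ref{lem:upperdensity}, and being careful that $R=d_p$ is the right scale at which the crude volume bound $M/a$ already suffices. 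No new constructions or estimates beyond those already developed in the section are required.
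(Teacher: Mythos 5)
Your proposal is correct and follows essentially the same route as the paper: verify the two hypotheses of Lemma \ref{lem:upperdensity} via the crude bound $\H^m(X_k(p,R))/R^m \le M/(aR^m)$ and the differential inequality from Lemma \ref{lem:muk}, then transfer to $\mu_0$ via weak convergence. The only cosmetic differences are that you apply Lemma \ref{lem:upperdensity} directly at $R=d_p$ with $\eta = c(p)$ (the paper works at $r<R<d_p$ with $\eta = \max\{M/(aR^m),\,2K\gamma_0\}$ and then lets $R\to d_p$), and that your Portmanteau step uses the open-neighborhood/liminf inequality while the paper invokes density of $D_p$ so that $\mu_0(\partial B(p,s))=0$ — both give the same conclusion.
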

	
	\begin{proof}
 		Let \( r<R<d_p \). By Lemmas \ref{lem:muk} and \ref{lem:prime}, we may apply Lemma \ref{lem:upperdensity} to prove that \[ \limsup_{k \to \i}\frac{\H^m(X_k(p,r))}{r^m}\leq \max \left\{ \frac{M}{a R^m}, 2K\g_0 \right\}, \] using the inputs \( \text{``\( Y_k \)''}=X_k \) and \( \text{``\( \eta \)''}= \max \left\{ \frac{M}{a R^m}, 2K\g_0 \right \} \). Take \( R\to d_p \).
		
		Now \eqref{eq:upperbound} follows from the Portmanteau theorem, since \( D_p \) is dense in \( (0,d_p) \).
	\end{proof}

	For \( 0<r<d_p \), let \( c_r(p)=\sup_{ q\in X_0(p,r)} \{ c(q)\} <\i \).
	\begin{cor}
		\label{cor:densitybound}
		If \( p \in X_0\setminus A \) and \( 0<r<d_p \), then \[ 0 < a\mathbf{c}/\a_m \le {\Theta_*}^m(\mu_0,q) \le {\Theta^*}^m(\mu_0,q) \le bc_r(p)/\a_m < \i \] for all \( q \in X_0(p,r) \).
	\end{cor}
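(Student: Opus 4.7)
The plan is to pass to the limit $s\to 0$ in the pointwise bounds already established in \eqref{eq:lowerbound} and Theorem~\ref{thm:upperdensity}, applied at each $q\in X_0(p,r)$. No new geometric construction is required; the corollary is essentially a bookkeeping statement extracting the pointwise density bounds from the measure-ball estimates.

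First I verify that every $q\in X_0(p,r)$ satisfies the hypotheses of both bounds, namely $q\in X_0\setminus A$ and $d_q>0$. Since $r<d_p\le d_H(p,A\cup U^c)$, the ball $B(p,r)$ is disjoint from $A\cup U^c$, so in particular $q\notin A$, and the triangle inequality gives $d_H(q,A\cup U^c)\ge d_p-r>0$. A routine compactness argument on the closed (hence compact) set $X_0(p,r)\subset C$, together with the standing hypothesis that $C$ is a uniform localizable Lipschitz neighborhood retract, yields a uniform positive lower bound on the retraction radii $\xi_q$ for $q\in X_0(p,r)$. Combining these with the absolute constant $\mathbf{R}$ gives $\inf_{q\in X_0(p,r)}d_q>0$, and therefore $c_r(p)=\sup_{q\in X_0(p,r)}\max\{M/(a d_q^m),\,2K\g_0\}<\i$.

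For the lower bound, \eqref{eq:lowerbound} applied at $q$ reads $\mu_0(B(q,s))/s^m\ge a\mathbf{c}$ for every $0<s<\min\{d_H(q,A),\mathbf{R}\}$; dividing by $\a_m$ and taking the liminf as $s\to 0$ produces ${\Theta_*}^m(\mu_0,q)\ge a\mathbf{c}/\a_m>0$. For the upper bound, Theorem~\ref{thm:upperdensity} applied at $q$ reads $\mu_0(B(q,s))/s^m\le b c(q)$ for every $0<s<d_q$; dividing by $\a_m$ and taking the limsup as $s\to 0$ produces ${\Theta^*}^m(\mu_0,q)\le b c(q)/\a_m\le b c_r(p)/\a_m$, the last inequality being the definition of $c_r(p)$ as a supremum over $X_0(p,r)$. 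The tautological inequality ${\Theta_*}^m(\mu_0,q)\le {\Theta^*}^m(\mu_0,q)$ fills in the middle, completing the chain.

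I do not expect a genuine obstacle: the substantive work is already absorbed into \eqref{eq:lowerbound} and Theorem~\ref{thm:upperdensity}. The only step that requires any attention at all is the preliminary compactness check that $c_r(p)<\i$, and this is a soft point-set argument using only that $X_0(p,r)$ is a compact subset of a uniform localizable Lipschitz neighborhood retract that avoids $A\cup U^c$.
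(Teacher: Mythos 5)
Your proof is correct and follows essentially the same strategy as the paper: extract pointwise density bounds from the ball estimates already established in \eqref{eq:lowerbound} and Theorem~\ref{thm:upperdensity}. The one genuine (though minor) difference is in the upper bound: you apply \eqref{eq:upperbound} directly at each $q$ and pass to the limsup as $s\to 0$, whereas the paper does not invoke \eqref{eq:upperbound} but instead re-derives it inside the corollary's proof, chaining $\Theta^{*m}(\mu_0,q)$ through Lemma~\ref{lem:prime} and the Portmanteau theorem to the first conclusion of Theorem~\ref{thm:upperdensity}, $\limsup_k \H^m(X_k(q,t))/t^m\le c(q)$. Your route is shorter and equally valid; the paper's version essentially duplicates the ``in particular'' step of Theorem~\ref{thm:upperdensity}.

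One small caution on your preliminary paragraph. The paper defines $c_r(p)$ in the line just before the corollary and \emph{asserts} $c_r(p)<\i$ without argument, so your attempt to justify it is a bonus rather than a required step. However, the claim that a ``routine compactness argument'' yields $\inf_{q\in X_0(p,r)}\xi_q>0$ is not quite routine: Definition~\ref{def:LNR} guarantees $\xi_q>0$ pointwise, and the ``uniform'' hypothesis controls $\sup_q\kappa_q$, but nothing in the definition makes $q\mapsto\xi_q$ lower semicontinuous, so compactness alone does not immediately give a uniform lower bound on retraction radii. (One would need to show that a retraction $\pi_{q,r}$ induces a valid $\pi_{q',s}$ for nearby $q'$ and smaller $s$, which is plausible but not literally automatic from the definition.) Since the paper takes $c_r(p)<\i$ for granted, this is a latent issue in the source rather than a gap you introduced, but you should not label it ``routine.'' The remainder of the verification --- that $q\in X_0\setminus A$, that $d_q\ge d_p-r>0$ as far as the $A\cup U^c$ and $\mathbf{R}$ terms go, and the liminf/limsup bookkeeping --- is all correct.
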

	 							
	\begin{proof}
		The lower bound is due to \cite{lipschitz} Corollary 4.3.5. The upper bound follows from Theorem \ref{thm:upperdensity}: We have by Lemma \ref{lem:prime} and the Portmanteau theorem,
		
		\begin{align*}
			{\Theta^*}^m(\mu_0,q) &= \limsup_{t \to 0, t\in D_q} \frac{\mu_0(B(q,t))}{\a_m t^m}\\
			&= \limsup_{t \to 0, t\in D_q} \lim_{k \to \i} \frac{\F^m\lfloor_{X_k\setminus A}B(q,t)}{\a_m t^m}\\
			&\le \limsup_{t \to 0, t\in D_q} \limsup_{k \to \i} b\frac{\H^m(X_k(q,t)}{\a_m t^m}\\
			&\le \frac{b c(q)}{\a_m }\le \frac{b c_r(p)}{\a_m} < \i.
		\end{align*}
	\end{proof}

	Using \cite{mattila} 6.9, we deduce the following corollary:
			\begin{cor}
		\label{cor:matapplied}
		If \( p \in X_0\setminus A \) and \( 0<r<d_p \), then \[ a \mathbf{c} \H^m(X_0(p,r)) \le \a_m\mu_0(B(p,r)) \le b c_r(p) 2^m \H^m(X_0(p,r)). \]
	\end{cor}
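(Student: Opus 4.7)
The plan is to apply Mattila \cite{mattila} Theorem 6.9 directly: the previous corollary supplies exactly the pointwise upper and lower density bounds that are the hypotheses of that theorem, and the present statement is just the corresponding integral conclusion applied to the Borel set $X_0(p,r)$.

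First I would reduce $\mu_0(B(p,r))$ to $\mu_0(X_0(p,r))$. Since $p \in X_0 \setminus A$ and $r < d_p \le d_H(p,A)$, the ball $B(p,r)$ is disjoint from $A$. By construction $X_0 = \supp(\mu_0)\cup A$, so $\supp(\mu_0)\cap B(p,r) \subseteq X_0(p,r)$, giving
\begin{equation*}
\mu_0(B(p,r)) = \mu_0(X_0(p,r)).
\end{equation*}
The set $X_0(p,r)$ is a closed subset of $\R^n$, hence Borel, so it is a legitimate input to Mattila's theorem.

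Next I would invoke Corollary \ref{cor:densitybound}, which gives, for every $q\in X_0(p,r)$,
\begin{equation*}
\frac{a\mathbf{c}}{\alpha_m} \;\le\; {\Theta_*}^m(\mu_0,q) \;\le\; {\Theta^*}^m(\mu_0,q) \;\le\; \frac{b c_r(p)}{\alpha_m}.
\end{equation*}
The upper half of Mattila 6.9 asserts that an upper bound $t$ on ${\Theta^*}^m(\mu_0,\cdot)$ over a Borel set $E$ implies $\mu_0(E)\le 2^m t\,\H^m(E)$, and the lower half asserts that a lower bound $s$ on ${\Theta_*}^m(\mu_0,\cdot)$ over $E$ implies $\mu_0(E)\ge s\,\H^m(E)$. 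Applying both to $E=X_0(p,r)$, multiplying through by $\alpha_m$, and combining with the reduction above yields the two inequalities of the corollary.

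There is essentially no obstacle: the only things to verify are the measurability of $X_0(p,r)$ (automatic, since $X_0$ is closed as the Hausdorff limit of the $X_k$) and the identification $\mu_0(B(p,r))=\mu_0(X_0(p,r))$ (which uses $B(p,r)\cap A=\emptyset$). Everything else is an immediate transcription of Corollary \ref{cor:densitybound} through Mattila 6.9.
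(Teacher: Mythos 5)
Your proposal is correct and takes essentially the same route as the paper, whose proof of this corollary consists entirely of the sentence ``Using \cite{mattila} 6.9, we deduce the following corollary.'' You have usefully filled in the two things that sentence leaves implicit: the identification $\mu_0(B(p,r)) = \mu_0(X_0(p,r))$ (valid because $B(p,r)\cap A = \emptyset$ for $r < d_p$ and $X_0 = \supp(\mu_0)\cup A$, so Mattila's theorem must be applied to $X_0(p,r)$ rather than to the ball itself), and the transcription of the pointwise density bounds from Corollary~\ref{cor:densitybound} into the integral inequality. One small citation note: both parts of Mattila's Theorem~6.9 are stated in terms of the \emph{upper} density $\Theta^{*m}$, not the lower density $\Theta_*^m$; but since $\Theta_*^m \le \Theta^{*m}$, the lower bound $a\mathbf{c}/\a_m \le \Theta_*^m(\mu_0,q)$ from Corollary~\ref{cor:densitybound} immediately gives $a\mathbf{c}/\a_m \le \Theta^{*m}(\mu_0,q)$, and the argument you describe goes through unchanged.
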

	
	\begin{cor}
		\label{cor:semiregular}
		\( X_0\setminus \cal{N}_\e(A) \) is semiregular for every \( \e>0 \).
	\end{cor}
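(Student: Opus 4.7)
The plan is to establish uniform two-sided density bounds for \( X_0 \) on \( E := X_0\setminus \cal{N}_\e(A) \), and then to deduce semiregularity by a standard Vitali packing estimate. The semiregularity constant \( K \) will depend on \( \e \).

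The first step is to show there exists \( \d=\d(\e)>0 \) such that every \( p\in E \) satisfies \( d_p\ge \d \). Indeed, \( d(E,A)\ge \e \) by construction, \( d(C,U^c)>0 \) by compactness of \( C \), \( \mathbf{R} \) is a fixed positive constant, and a finite-cover argument using uniform localizability provides a positive lower bound for the retraction radii \( \xi_p \) over the compact set \( C \). A parallel argument shows \( c(q)\le c_0<\i \) uniformly for \( q \) in the \( \d/2 \)-neighborhood of \( E \), so that \( c_r(p)\le c_0 \) for every \( p\in E \) and \( r\le \d/2 \). Combining \eqref{eq:lowerbound}, Theorem \ref{thm:upperdensity} and Corollary \ref{cor:matapplied} then yields constants \( C_1,C_2>0 \) depending only on \( \e \) and the ambient data such that
\[ C_1 r^m \le \H^m(X_0(p,r)) \le C_2 r^m \]
for every \( p\in E \) and every \( 0<r<\d \).

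With these bounds in hand, fix \( x\in \R^n \) and \( 0<r\le r' \) with \( E(x,r')\ne\emptyset \), pick \( q_0\in E(x,r') \), and take a maximal \( r \)-separated collection \( q_1,\dots,q_N\in E(x,r') \). By maximality the balls \( B(q_i,r) \) cover \( E(x,r') \), while the sets \( X_0(q_i,r/2) \) are pairwise disjoint. I would then split into three regimes. If \( r'\le \d/4 \), the disjoint sets lie in \( X_0(q_0,3r')\subset X_0(q_0,\d) \), and the density bounds give \( NC_1(r/2)^m\le C_2(3r')^m \), hence \( N\le 2^m3^m(C_2/C_1)(r'/r)^m \). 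If \( r\le \d/4\le r' \), I would cover \( E \) by a fixed finite collection \( \{B(p_j,\d/4)\}_{j=1}^{M_0} \) with \( p_j\in E \); the same disjointness argument applied inside each \( B(p_j,\d/2) \) bounds the number of \( q_i \) per ball by \( (C_2/C_1)(\d/r)^m \), so summing and using \( \d\le 4r' \) gives \( N\le 4^m M_0(C_2/C_1)(r'/r)^m \). Finally, if \( r>\d/4 \), the same compact cover already consists of \( \le M_0 \) balls of radius \( r \) covering \( E\supset E(x,r') \).

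Taking \( K \) to be the maximum of the three resulting constants yields the desired \( K \)-semiregularity of \( E \). The main subtlety lies in the first step: one must extract uniform positive lower bounds on all of the geometric parameters entering \( d_p \) and \eqref{eq:c(p)} from compactness of \( C \) together with uniform localizability. Once this is secured, the Vitali packing estimate is classical.
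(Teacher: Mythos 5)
Your proof is correct and follows essentially the same strategy as the paper: a maximal $r$-separated family inside $Y(x,r')$, a packing estimate using the uniform lower density bound on small balls and upper density bound on a larger ball, and a case split to handle scales beyond the radius at which the density bounds apply. The only cosmetic difference is that the paper runs the packing estimate directly on the limit measure $\mu_0$ via \eqref{eq:lowerbound} and Theorem \ref{thm:upperdensity}, whereas you first pass to two-sided $\H^m(X_0)$ density bounds through Corollary \ref{cor:matapplied}; both routes give the same conclusion.
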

	
	\begin{proof}
		Let \( Y=X_0\setminus \cal{N}_\e(A) \). We show first that there exists a constant \( C \) such that if \( x\in \R^n \) and \( 0<r\leq R<\e'\equiv \min{\mathbf{R}/4,\e/4} \), then \( Y(x,R) \) can be covered by \( C (R/r)^m \) balls of radius \( r \). Indeed, suppose \( \{p_i\}_{i\in I} \) is a maximal family of points in \( Y(x,R) \) which are of distance \( \geq r \) from each other. Then, by \eqref{eq:lowerbound} and Theorem \ref{thm:upperdensity},
		\begin{align*}
			r^m|I|= 2^m \sum_{i\in I}(r/2)^m &\leq 2^m \sum_{i\in I} \frac{\mu_0(B(p_i,r/2))}{a\mathbf{c}}\\
			&\leq 2^m \frac{\mu_0(\cup_{i\in I}B(p_i,r/2))}{a\mathbf{c}}\\
			&\leq \frac{2^m}{a \mathbf{c}} \mu_0(B(x,2R))\\
			&\leq \frac{2^m}{a \mathbf{c}} \mu_0(B(p,4R))\\
			&\leq \frac{2^{3m}}{a \mathbf{c}} b c(p) R^m\\
			&\leq \frac{2^{3m}}{a \mathbf{c}} b \max \{ M/(a (4\e')^m), 2K\g_0 \} R^m,
		\end{align*}
		where \( p \) is any point in \( \{p_i\}_{i\in I} \). The last inequality is due to \eqref{eq:c(p)} and the fact that \( 4\e' \) is a lower bound for \( d_p \) for \( p\in Y \). 
		
		The general case follows from the finiteness of \( \mu_0 \) and compactness of \( Y \). Indeed, if \( r<\e'\leq R \), then it is enough to show that \( Y \) can be covered by \( C r^{-m} \) balls of radius \( r \). The proof is the same as the first case, replacing \( \mu_0(B(p,4R)) \) with \( \mu_0(\R^n) \) in the antepenultimate line. If \( \e'\leq r \leq R \), then it is enough to show that \( Y \) can be covered by \( C \) balls of radius \( \e' \), and such a finite \( C \) exists since \( Y \) is compact. 
	\end{proof}
	
	\begin{cor}
		\label{cor:approx}
		If \( E \) is an approximate tangent \( m \)-plane for \( X_0\setminus A \) at \( p\in X_0\setminus A \), and \( 0<\e<1 \), then there exists \( r>0 \) such that \( X_0(p,r)\setminus \overline{\cal{C}(p,E,\e)}=\emptyset \). 
	\end{cor}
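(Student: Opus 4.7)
The plan is to deduce Corollary \ref{cor:approx} directly from Lemma \ref{lem:approx} applied to $X=X_0$ at the point $p$. Since $p\in X_0\setminus A$ and $A$ is closed, I would first fix $s>0$ small enough that $B(p,s)\cap A=\emptyset$, $s<d_p$, and (after possibly shrinking) $s<d_q$ for all $q\in X_0\cap B(p,s)$; the last condition is available because the same uniform control on $d_\cdot$ over $X_0(p,s)$ underlies the finiteness assertion $c_s(p)<\i$ in the definition preceding Corollary \ref{cor:densitybound}. With this choice every point of $X_0\cap B(p,s)$ lies in $X_0\setminus A$, so the notions of approximate tangent $m$-plane for $X_0$ and for $X_0\setminus A$ at $p$ coincide, and in particular $E$ is an approximate tangent $m$-plane for $X_0$ itself at $p$.

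The substantive step is to verify the uniform density hypothesis required by Lemma \ref{lem:approx}:
\[
\inf\left\{\frac{\H^m(X_0(q,r))}{r^m}\,:\,q\in X_0,\ B(q,r)\subset B(p,s)\right\}>0.
\]
For this I would combine two bounds already in the paper. First, \eqref{eq:lowerbound} applied at $q$ yields $\mu_0(B(q,r))\ge a\mathbf{c}\,r^m$. Second, since $q\in X_0\setminus A$ and $r\le s<d_q$, the right-hand inequality of Corollary \ref{cor:matapplied} at $q$ gives
\[
\H^m(X_0(q,r))\ge \frac{\a_m\,\mu_0(B(q,r))}{2^m b\,c_r(q)}.
\]
The inclusion $X_0(q,r)\subset X_0(p,s)$, forced by $B(q,r)\subset B(p,s)$, then implies $c_r(q)\le c_s(p)<\i$, which combines with the previous two estimates to give the uniform lower bound $\H^m(X_0(q,r))/r^m\ge a\mathbf{c}\a_m/(2^m b\,c_s(p))>0$.

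With the hypothesis of Lemma \ref{lem:approx} verified, the lemma produces the desired $r>0$ with $X_0(p,r)\setminus\overline{\cal{C}(p,E,\e)}=\emptyset$, which is exactly the conclusion. The argument is essentially routine once one recognises that Lemma \ref{lem:approx} is tailor-made for this situation; the only genuine bookkeeping is the uniform control of $d_q$ and $c_r(q)$ over a small neighborhood of $p$, needed to invoke Corollary \ref{cor:matapplied} at $q$ rather than only at $p$, and this is essentially already packaged into the finiteness statement preceding Corollary \ref{cor:densitybound}.
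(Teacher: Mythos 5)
Your proposal is correct and follows essentially the same route as the paper, which simply cites \eqref{eq:lowerbound} and Corollary \ref{cor:matapplied} to verify the uniform density hypothesis and then invokes Lemma \ref{lem:approx} (applied to $X_0\setminus A$, but this coincides with $X_0$ on a small ball around $p$, as you note). Your write-up merely spells out the bookkeeping that the paper leaves implicit.
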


	\begin{proof}
		By \eqref{eq:lowerbound} and Corollary \ref{cor:matapplied}, we may apply Lemma \ref{lem:approx} to the set \( X=X_0\setminus A \).
	\end{proof}

	\begin{thm}[Rectifiability]
		\label{thm:rectifiable}
		The measure \( \mu_0\lfloor_{A^c} \) is \( m \)-rectifiable.
	\end{thm}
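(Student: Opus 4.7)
By Corollaries \ref{cor:densitybound} and \ref{cor:matapplied}, $\mu_0\lfloor_{A^c}$ is mutually absolutely continuous with $\H^m\lfloor_{X_0\setminus A}$, which has finite total mass, so it suffices to prove that $X_0\setminus A$ is $m$-rectifiable. Suppose not; decompose $X_0\setminus A = R\cup U$ with $R$ Borel and $m$-rectifiable, $U$ Borel and purely $m$-unrectifiable, and $\H^m(U)>0$. By the Lebesgue density theorem applied to $\H^m\lfloor U$, pick a density point $p_0\in U\setminus A$ such that
\[
\lim_{r\to 0}\frac{\H^m(R(p_0,r))}{\H^m(X_0(p_0,r))}=0.
\]

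\textbf{Deformation.} Fix a smallness parameter $\eta>0$ to be chosen, and let $r\in D_{p_0}$ satisfy $r<d_{p_0}$, $B(p_0,\sqrt{n}r)\cap A=\emptyset$, and $\H^m(R(p_0,r))\le \eta\,\H^m(X_0(p_0,r))$. Let $Q$ be the $n$-cube of side $2r$ centered at $p_0$. Apply Lemma \ref{lem:ffapprox} with $d=m$, $E=X_0\cap Q$, and $U\cap Q$: for $j$ sufficiently large we obtain a Lipschitz homotopy $\phi$ from the identity to $\phi_1$ satisfying Lemma \ref{lem:FF} \ref{lem:ff:item:0}--\ref{lem:ff:item:6} (in particular $\phi_t=\mathrm{id}$ off $Q$) together with the key property $\H^m(\phi_1(U))=0$. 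The composition $g\equiv \pi_{p_0,r}\circ \phi_1$ is then a Lipschitz self-map of $C$ which fixes $A$ and is homotopic to the identity relative to $A$ via $\pi_{p_0,r}\circ\phi_t$; hence by Axiom \eqref{lip}, $Y_k\equiv g(X_k)^\dagger\in\Span(C,A)$ for all $k$.

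\textbf{Estimate and contradiction.} By Corollary \ref{cor:semiregular}, $X_0$ is $K$-semiregular away from $A$, so since $X_k\to X_0$ in Hausdorff metric, for $k$ large one has $X_k\cap Q\subset \cN(X_0,\delta 2^{-j}\diam Q)$, and Lemma \ref{lem:FF}\ref{lem:ff:item:6} forces $\phi_1(X_k\cap Q)\subset S_{j,m}(Q)\cup\fr Q$. On each face $F\in\Delta_{j,m}(Q)$, $\phi_1$ amounts (up to the straight-line concatenations comprising the homotopy) to the orthogonal projection $\Theta_F\circ\Theta_{\tilde F}$, where $\tilde F$ is the Besicovitch-Federer $m$-plane annihilating $\H^m(\Theta_{\tilde F}(U))$. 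Hausdorff closeness of $X_k$ to $R\cup U$ transfers this to $\H^m(\phi_1(X_k\cap Q))\le c\,\H^m(R(p_0,r))+o_k(1)\le c\eta\,\H^m(X_0(p_0,r))+o_k(1)$. Combining with the Reifenberg lower bound \eqref{eq:lowerbound} and the upper density bound of Theorem \ref{thm:upperdensity}, we deduce $\F^m(Y_k)\le \F^m(X_k)-\delta' r^m$ for some $\delta'>0$ and all sufficiently large $k$, contradicting $\F^m(X_k)\to\frak m\le \F^m(Y_k)$.

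\textbf{Main obstacle.} The central difficulty is the transfer of the Besicovitch-Federer estimate from $U\subset X_0$ to the rectifiable approximating sets $X_k$. The map $\phi_1$ produced by Lemma \ref{lem:ffapprox} is tailored to annihilate the $\H^m$-image of $U$, but applied directly to $X_k$ it only yields the weak bound $\H^m(\phi_1(X_k\cap R'))\le c_1 \H^m(X_k\cap R')$ per cube from Lemma \ref{lem:FF}\ref{lem:ff:item:4}, which is insufficient. Bridging this gap relies on using Corollary \ref{cor:semiregular} together with Lemma \ref{lem:FF}\ref{lem:ff:item:6} to push $\phi_1(X_k)$ into the $m$-skeleton of the grid, and on exploiting the Hausdorff proximity $X_k\approx R\cup U$ to pull the Besicovitch-Federer annihilation of $U$ across to $X_k$. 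The consistent choice of scale $r$, refinement $j$, and smallness $\eta$ needed to produce a definite decrease in $\F^m$ is the delicate point.
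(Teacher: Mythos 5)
Your outline follows the paper's strategy fairly closely: reduce to showing $X_0\setminus A$ is $m$-rectifiable, decompose into rectifiable and purely unrectifiable pieces, locate a point and scale where the rectifiable part is negligible, apply the Besicovitch--Federer--modified Federer--Fleming map of Lemma~\ref{lem:ffapprox} to annihilate the unrectifiable piece, push $X_k$ through this map, and derive a contradiction with minimality. But the step you flag yourself as ``the main obstacle'' is exactly the step the paper works hardest on, and your sketch does not close it. The inequality
\[
\H^m(\phi_1(X_k\cap Q)) \;\le\; c\,\H^m(R(p_0,r)) + o_k(1)
\]
is asserted without argument. Hausdorff proximity of $X_k$ to $X_0$ does \emph{not} by itself transfer an $\H^m$-null image: $\phi_1$ is only Lipschitz, $U$ is $\H^m$-null only after projection, and a priori $\phi_1$ applied to $X_k$ near $U$ could contribute mass of order $\H^m(X_k\cap Q)$, which is not small. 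Lemma~\ref{lem:FF}\ref{lem:ff:item:4} gives only the useless bound $c_1\H^m(X_k\cap R')$ per cube, as you note.

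The paper's mechanism for bridging this gap is the part your sketch omits. After forcing $\phi_1(X_k\cap Q')\subset S_{j,m}(Q')\cup\fr Q'$ via semiregularity and Lemma~\ref{lem:FF}\ref{lem:ff:item:6}, it considers the measures $\nu_k=\H^m\lfloor_{\phi_1(X_k\cap Q')}$ and a weak subsequential limit $\nu_0$. Because the sets $\phi_1(X_k\cap Q')$ live in the $m$-skeleton of a fixed dyadic grid, one gets a uniform upper density bound $\Theta^{*m}(\nu_0,x)\le\binom{n}{m}2^m$ at every point of $\supp(\nu_0)$, and by properness of $\phi_1$ one gets $\supp(\nu_0)\subset\phi_1(X_0\cap Q')$, which is $\H^m$-small by the Besicovitch--Federer construction. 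Mattila 6.9 then converts the density bound plus small support into a bound on $\nu_0(\R^n)$, and hence on $\H^m(\phi_1(X_{k_i}\cap Q'))$ along a subsequence. This is the ingredient your proposal is missing, and without it the contradiction in your final paragraph is not established. Secondary issues: Lemma~\ref{lem:ffapprox} requires $\bar U\subset\mathring Q$, so the paper passes to a slightly larger cube $Q'\supset Q$; your application to $U\cap Q$ on the cube $Q$ itself does not meet that hypothesis. Also, the Lebesgue-density argument you invoke gives a vanishing ratio $\H^m(R)/\H^m(X_0)$ but not directly the quantitative lower bound on $\H^m(P(p,r))$ the paper obtains from Mattila 6.2 and needs in the final inequality chain; this is fixable via Corollary~\ref{cor:matapplied} but should be said.
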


	\begin{proof}
		By Corollary \ref{cor:matapplied}, it is enough to show that \( X_0\setminus A \) is \( m \)-rectifiable. Write \( X_0\setminus A=R\cup P \) where \( R \) is a \( m \)-rectifiable Borel set, \( P \) is purely \( m \)-unrectifiable and \( R\cap P=\emptyset \) (see e.g. \cite{mattila} 15.6.) If \( \H^m(P)>0 \), there exists by \cite{mattila} 6.2 a point \( p\in P \) such that \( 2^{-m}\leq {\Theta^*}^m(P,p)\leq 1 \) and \( {\Theta^*}^m(R,p)=0 \). Thus, for every \( \e>0 \) there exists \( 0<r<d_p/(2\sqrt{n}) \) such that \( \H^m(R(p,r\sqrt{n}))<\e \,r^m \) and \( \H^m(P(p,r)) > r^m \a_m 2^{-m-1} \). Let \( Q \) be a cube centered at \( p \) with side length \( 2r \). Then \( \H^m(P\cap Q)> \diam(Q)^m 2^{-2m-1}\a_m \) and 
		\begin{equation}
			\label{eq:rect0}
			\H^m(R\cap Q)<\e\cdot \diam(Q)^m.
		\end{equation}
		Since \( \H^m(X_0\setminus A)<\i \), there exists a cube \( Q' \) disjoint from \( \cal{N}(A,d_p/2) \) satisfying \( Q\subset \ring{Q'}, \)
		\begin{equation}
			\label{eq:rect1}
			\H^m(X_0\cap Q'\setminus Q)<\e\cdot \diam(Q')^m,
		\end{equation}
		\begin{equation}
			\label{eq:rect2}
			\H^m(P\cap Q)> \diam(Q')^m 2^{-2m-1} \a_m,
		\end{equation}
		and
		\begin{equation}
			\label{eq:rect6}
			\mu_0(\fr Q')=0.
		\end{equation}
		
		Now apply Lemma \ref{lem:ffapprox} to the cube \( Q' \) and the sets \( \text{``\( U \)''}=P\cap Q \) and \( \text{``\( E \)''}=X_0\cap Q' \). By Corollary \ref{cor:semiregular}, the set \( E \) is semiregular, so the Lipschitz constant of the resulting map \( \phi_1 \) is bounded above by some constant \( J<\i \), independent of the choice of \( \e \), \( Q \) or \( Q' \). Thus, by \eqref{eq:rect1}, \eqref{eq:rect0}, \eqref{lem:ffapprox:1}, and \eqref{eq:rect2},
 		\begin{align}
			\H^m(\phi_1(X_0\cap Q'))&\leq \H^m(\phi_1(X_0\cap Q'\setminus Q))+ \H^m(\phi_1(R\cap Q))+\H^m(\phi_1(P\cap Q))\\
			&\leq 2 J^m \e\cdot \diam(Q')^m\\
			&< \frac{\a_m}{2^{2m+2}} J^m \e \,\H^m(P\cap Q)\\
			&\leq \frac{\a_m}{2^{2m+2}} J^m \e \,\H^m(X_0\cap Q').\label{eq:rect4}
		\end{align}
		
		We will need to apply the map \( \phi_1 \) to \( X_k \) for \( k \) large. Since \( X_k\to X_0 \) in the Hausdorff metric, it follows from Lemma \ref{lem:FF} \ref{lem:ff:item:6} that there exists \( N<\i \) such that if \( k>N \), then 
		\begin{equation}
			\label{eq:rect3}
			\phi_1(X_k\cap Q')\subset S_{j,d}(Q')\cup \p(Q').
		\end{equation}

		For each \( k\geq 1 \) let \( \nu_k \) denote the measure \( \H^m\lfloor_{\phi_1(X_k\cap Q')} \) and let \( \nu_0 \) be a subsequential limit of \( \{\nu_k\}_{k\in \N} \).
		
		Note that \( \supp(\nu_0)\subset Q' \). More specifically, \( \supp(\nu_0)\subset \phi_1(X_0\cap Q') \), for if \( x\notin \phi_1(X_0\cap Q') \) and \( x\in Q' \), then since \( \phi_1 \) is proper, there is an open neighborhood \( V \) of \( x \) whose closure is disjoint from \( \phi_1(X_0) \). Thus, for large enough \( k \), we have \( \phi_1(X_k)\cap \overline{V}=\emptyset \). So, \( \nu_0(V)\leq \limsup \nu_k(\overline{V})=0. \) 
		
		Now suppose \( x\in \phi_1(X_0\cap Q') \). By \eqref{eq:rect3} it holds that for \( s>0 \) small enough,
		\begin{align*}
			\nu_0(B(x,s))&\leq \limsup \nu_k(\cal{N}(x,2s))\\
			&\leq \H^m(S_{j,d}(Q')\cup \fr(Q') \cap \cal{N}(x,2s))\\
			&\leq {n \choose m} \a_m (2s)^m.
		\end{align*}
		We conclude that \( {\Theta^*}^m(\nu_0,x)\leq {n\choose m}2^m \) for all \( x\in \supp(\nu_0) \) and so by \cite{mattila} 6.9 and \eqref{eq:rect4}, \[ \nu_0(\R^n)\leq {n\choose m}2^{2m} \H^m(\phi_1(X_0\cap Q')) \leq {n\choose m} 2^{4m+2}\a_m^{-1} J^m \e \,\H^m(X_0\cap Q'). \]
		
		Thus, there exists a sequence \( k_i\to \i \) with \( \H^m(\phi_1(X_{k_i}\cap Q'))\leq {n\choose m} 2^{4m+3}\a_m^{-1} J^m \e \,\H^m(X_0\cap Q'), \) and so
		\begin{equation}
			\label{eq:mainpoint}
			\F^m(\pi(\phi_1(X_{k_i}\cap Q'))) \leq T \e \,\H^m(X_0\cap Q'),
		\end{equation}
		where \( T\equiv b \lip(\pi)^m {n\choose m} 2^{4m+3}\a_m^{-1} J^m. \) Therefore,
		
		\begin{align}
			\label{eq:rect7}
			\F^m(\pi(\phi_1(X_{k_i}))\setminus A) \leq T \e \,\H^m(X_0\cap Q')+ \F^m(X_{k_i}\cap Q'^c\setminus A).
		\end{align}
		
		For large enough \( i \), by Corollary \ref{cor:densitybound}, weak convergence, \eqref{eq:rect6} and \cite{mattila} 6.9, \[ \diam(Q')^m \a_m/ 2^{2m+1} \leq \H^m(X_0\cap Q') \leq W \F^m(X_{k_i}\cap Q'), \] where \( W=2\a_m/(a\mathbf{c}). \) Thus for \( 0< \e < 1/(WT) \), \[ T \e \,\H^m(X_0\cap Q') \leq -(1/W - T \e) \diam(Q')^m \a_m/ 2^{2m+1} + \F^m(X_{k_i}\cap Q'). \]
		
		Together with \eqref{eq:rect7}, this implies \[ \F^m(\pi(\phi_1(X_{k_i}))\setminus A) \leq \F^m(X_{k_i}\setminus A) -(1/W - T \e) \diam(Q')^m \a_m/ 2^{2m+1} \] a contradiction for \( i \) large enough, since \( \pi(\phi_1(X_{k_i}))^\dagger\in \Span(C,A) \) and \( \F^m(X_{k_i}\setminus A)\to \frak{m} \).
	\end{proof}	
	
\section{Lower semicontinuity}

	Given \( 0< \e < 1 \) and \( p\in \R^n \), let \( A(p,\e,r) \) denote the closed annular region \( B(p,r) \setminus \cN(p,(1-\e)r) \). For \( p\in A^c \), let \( C_{m,p} = 2c(p)m\frac{b}{a} \).

	\begin{lem}
		\label{lem:newhalfradius}
		Let \( 0 < \e < 1/2 \) and \( p\in X_0\setminus A \). There exist sequences \( \{r_i\}_{i\in \N}\subset D_p \) and \( \{ N_i \}_{i\in \N}\subset \N \) with \( r_i \to 0 \) such that \[ \H^m(X_k \cap A(p,\e,r_i)) \le C_{m,p}\e r_i^m \] for all \( k \ge N_i \).
	\end{lem}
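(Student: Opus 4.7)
The plan is to combine the upper density estimate $\mu_0(B(p,r))\le bc(p)r^m$ from Theorem \ref{thm:upperdensity} with a pigeonhole argument across a geometric sequence of scales, and then transfer the resulting bound on $\mu_0$ to the sets $X_k$ via the Portmanteau theorem for closed sets, together with $\F^m\ge a\H^m$.

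\textbf{Pigeonhole on geometric scales.} Set $g(r):=\mu_0(B(p,r))$. By Theorem \ref{thm:upperdensity} one has $g(r)\le bc(p)r^m$ for every $0<r<d_p$. Fix $r_0\in(0,d_p)$, put $s_j:=(1-\e)^j r_0$, and suppose for contradiction that
\[
 g(s_j)-g(s_{j+1}) \ge C_{m,p}\,a\,\e\,s_j^m \quad \text{for every } j\ge 0.
\]
Telescoping over $j=0,\dots,J-1$ and using the elementary inequality $1-(1-\e)^m\le m\e$ (valid for $\e\in[0,1]$, $m\ge 1$),
\[
 g(r_0) \ge C_{m,p}\,a\,\e\,r_0^m\sum_{j=0}^{J-1}(1-\e)^{jm} = C_{m,p}\,a\,\e\,\frac{1-(1-\e)^{Jm}}{1-(1-\e)^m}\,r_0^m \ge \frac{C_{m,p}\,a}{m}\bigl(1-(1-\e)^{Jm}\bigr)\,r_0^m.
\]
Letting $J\to\infty$ and recalling $C_{m,p}\,a/m = 2c(p)\,b$, I obtain $g(r_0)\ge 2bc(p)\,r_0^m$, contradicting the upper density bound $g(r_0)\le bc(p)\,r_0^m$. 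Hence some $j=j(r_0)\ge 0$ satisfies the strict inequality
\[
 g(s_j)-g(s_{j+1}) < C_{m,p}\,a\,\e\,s_j^m,
\]
and the strictness will be essential for the last step.

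\textbf{Portmanteau and iteration.} Lemma \ref{lem:prime} asserts that $D_p$ has full Lebesgue measure in $(0,d_p)$, so the set of $r_0$ for which every $s_j=(1-\e)^jr_0$ lies in $D_p$ is a countable intersection of full-measure sets, hence itself of full measure. Choose $r_0$ from this set; then $\mu_0(\fr B(p,s_{j+1}))=0$, which gives $\mu_0(A(p,\e,s_j))=g(s_j)-g(s_{j+1})$. Since $A(p,\e,s_j)$ is closed and lies in $B(p,d_p)\subset A^c$, the Portmanteau inequality applied to $\F^m\lfloor_{X_k\setminus A}\rightharpoonup\mu_0$ yields
\[
 \limsup_{k\to\infty}\F^m\bigl(X_k\cap A(p,\e,s_j)\bigr)\le \mu_0(A(p,\e,s_j)) < C_{m,p}\,a\,\e\,s_j^m,
\]
and dividing by $a$ via $\F^m\ge a\H^m$ produces $N=N(r_0)$ with $\H^m(X_k\cap A(p,\e,s_j))\le C_{m,p}\,\e\,s_j^m$ for every $k\ge N$. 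I then repeat the construction with a sequence $r_0^{(i)}\to 0$ drawn from the above full-measure subset, setting $r_i:=s_{j(r_0^{(i)})}\le r_0^{(i)}$ and $N_i:=N(r_0^{(i)})$ to obtain the required sequences with $r_i\in D_p$ and $r_i\to 0$.

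\textbf{Anticipated obstacle.} The only substantive step is the pigeonhole argument, whose delicacy is in matching the constant: the weights $(1-\e)^{jm}$ combined with $1-(1-\e)^m\le m\e$ produce exactly the factor of $m$, while the factor of $2$ built into $C_{m,p}=2c(p)mb/a$ leaves precisely the slack needed to convert a strict inequality from the contradiction into the uniform bound on $\H^m$ uniformly for $k\ge N_i$.
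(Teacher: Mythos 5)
Your proof is correct and rests on the same ingredients as the paper's: the upper density bound \(\mu_0(B(p,r)) \le bc(p)r^m\) from Theorem \ref{thm:upperdensity}, a telescoping sum over the geometric scales \(s_j=(1-\e)^j r_0\), the inequality \(1-(1-\e)^m\le m\e\), and the Portmanteau inequality for closed sets, with the factor of \(2\) built into \(C_{m,p}\) supplying exactly the needed slack. The paper packages this as a direct contradiction: negating the lemma, it obtains \(\mu_0(A(p,\e,r))\ge aC_{m,p}\e r^m\) for \emph{all} small \(r\) (using density of \(D_p\) together with upper semicontinuity of \(r\mapsto\mu_0(A(p,\e,r))\)), and sums over the geometric annuli to contradict the upper density bound. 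Your contrapositive packaging — pigeonhole a single good annulus at each scale \(r_0\), then transfer the strict \(\mu_0\) estimate to \(\H^m(X_k\cap A(p,\e,s_j))\) via Portmanteau — is logically equivalent, and your choice of \(r_0\) from the full-measure set \(\bigcap_j (1-\e)^{-j}D_p\) is a modest simplification that avoids having to extend the annulus estimate from \(D_p\) to all radii. Both routes are sound and essentially the same.
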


	\begin{proof}
		If not, there exists \( \d > 0 \) such that for every \( r \in D_p\cap (0,\d) \) there exists a subsequence \( k_j\to \i \) with
		\begin{equation}
			\label{kstuff}
			\F^m(X_{k_j}\cap A(p,\e,r)) > a C_{m,p}\e r^m,
		\end{equation}
		and hence
		\begin{equation}
			\label{stuff}
			\mu_0(A(p,\e,r)) \geq a C_{m,p}\e r^m.
		\end{equation}
		Indeed, since \( D_p \) is dense in \( (0,d_p) \), \eqref{stuff} holds for all \( r\in (0,\min\{d_p,\d\}) \).

		Since \( \mu_0 \) is finite, there exist arbitrarily small \( r\in (0,\min\{d_p,\d\}) \) such that \( \mu_0(\fr A(p,\e,r_i))=0 \) for all \( i\geq 0 \), where \( r_i \equiv (1-\e)^i r \). For such \( r \), it follows from \eqref{stuff} that
		\begin{align*}
			\mu_0(B(p,r)) &= \sum_{i=0}^\i \mu_0(A(p,\e,r_i))\\
			&\geq a C_{m,p} \e \sum_{i=0}^\i (1-\e)^{im} r^m\\
			&= a C_{m,p}\frac{\e}{1-(1-\e)^m} r^m\\
			&\ge 2c(p)b r^m,
		\end{align*}
		contradicting Theorem \ref{thm:upperdensity}.
	\end{proof}

	\begin{thm}
		\label{thm:lsc}
		If \( V\subset A^c \) is open, then \[ \F^m(X_0 \cap V) \le \liminf_{k\to\i} \F^m(X_k \cap V). \]
	\end{thm}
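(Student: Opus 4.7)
The plan is to reduce the statement to a pointwise density inequality and then apply ellipticity locally. Since $V \subset A^c$ is open, Portmanteau's theorem applied to the weak convergence $\mu_k := \F^m\lfloor_{X_k\setminus A} \to \mu_0$ gives $\mu_0(V) \le \liminf_k \F^m(X_k \cap V)$, so it suffices to show $\F^m(X_0 \cap V) \le \mu_0(V)$. By Theorem \ref{thm:rectifiable} and Corollary \ref{cor:matapplied}, $\mu_0\lfloor_{A^c}$ and $\H^m\lfloor_{X_0\setminus A}$ are mutually absolutely continuous; let $\theta$ be the associated Radon-Nikodym density. By the density-one property of $m$-rectifiable sets at their tangent points, $\theta(p) = \lim_{r\to 0} \mu_0(B(p,r))/(\alpha_m r^m)$ at $\H^m$-a.e.\ $p \in X_0 \setminus A$, and the claim reduces to proving $\theta(p) \ge f(p, T_p X_0)$ at such points.

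Fix a generic such $p$ with tangent $m$-plane $E = T_p X_0$, and let $\e > 0$. Let $s > 0$ be the threshold provided by ellipticity at $(p, E)$. By Lemma \ref{lem:newhalfradius} combined with the density of $D_p$ in $(0, d_p)$, one can choose radii $r_i \to 0$ in $D_p \cap (0, s)$ and integers $N_i$ so that $\H^m(X_k \cap A(p,\e,r_i)) \le C_{m,p} \e r_i^m$ for every $k \ge N_i$. The natural competitor is
\[
Z_k := \bigl(X_k \cap B(p, (1-\e)r_i)\bigr) \cup \bigl(E \cap A(p, \e, r_i)\bigr),
\]
which is closed and $m$-rectifiable, sits in $B(p, r_i)$, and satisfies $Z_k \cap \fr B(p, r_i) = E \cap \fr B(p, r_i)$. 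Since $X_k \subset C$ and $\H^m(E \cap A(p, \e, r_i)) \le m\e\alpha_m r_i^m$, one easily obtains
\[
\F^m(Z_k \cap C) + b\H^m(Z_k \setminus C) \le \F^m(X_k \cap B(p, r_i)) + bm\e\alpha_m r_i^m.
\]
Granting the no-retraction condition for $Z_k$, ellipticity then yields
\[
(f(p,E) - \e)\alpha_m r_i^m \le \F^m(X_k \cap B(p, r_i)) + bm\e\alpha_m r_i^m.
\]
Passing to $\liminf_k$ via weak convergence with $r_i \in D_p$ (so that $\F^m(X_k \cap B(p, r_i)) \to \mu_0(B(p, r_i))$), dividing by $\alpha_m r_i^m$, then letting $r_i \to 0$ followed by $\e \to 0$ gives $\theta(p) \ge f(p, E)$.

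The main obstacle is verifying the no-retraction condition for the chosen $Z_k$. I would argue by contradiction: if a retraction $\rho: Z_k \to E \cap \fr B(p, r_i)$ existed, one would, in the spirit of Lemma \ref{cor:deformation}, build a Lipschitz map $g: C \to C$ fixing $A$ and homotopic to the identity, equal to the identity outside $B(p, r_i)$, such that $g(X_k) \cap B(p, r_i)$ lies in $E \cap \fr B(p, r_i)$ together with the image of the thin annular portion $X_k \cap A(p, \e, r_i)$ controlled by Lemma \ref{lem:newhalfradius}. By Axiom \eqref{lip}, $g(X_k)^\dagger \in \Span(C, A)$, and its $\F^m$-content inside $B(p, r_i)$ is at most of order $\e r_i^m$. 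Lemma \ref{lem:min} would then force $\F^m(X_k \cap B(p, r_i))$ to be small of the same order, contradicting the Reifenberg regularity lower bound \eqref{eq:lowerbound} for $\e$ small and $k$ large. The delicate point is producing a Lipschitz extension of $\rho$ whose effect on $X_k$ in the annulus is genuinely controlled, and here the thinness guaranteed by Lemma \ref{lem:newhalfradius} is essential.
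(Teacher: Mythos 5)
Your reduction to the pointwise density inequality $\theta(p)\ge f(p,T_pX_0)$ via Portmanteau and Radon--Nikodym differentiation is valid and in fact streamlines the bookkeeping compared to the paper's Vitali-covering argument. The problem is the competitor. With
\[
Z_k := \bigl(X_k \cap B(p, (1-\e)r_i)\bigr) \cup \bigl((p+E) \cap A(p, \e, r_i)\bigr),
\]
the inner piece $X_k \cap B(p,(1-\e)r_i)$ and the planar annulus meet at most in $(p+E)\cap \fr B(p,(1-\e)r_i)$, and generically they are disjoint compact sets. When they are disjoint, $Z_k$ \emph{does} retract onto $E\cap\fr B(p,r_i)$: radially project the planar annulus to its outer sphere, and map the entire $X_k$-piece to a single point of that sphere. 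So the hypothesis \eqref{elliptic:3} of ellipticity fails for your $Z_k$, and the elliptic inequality cannot be invoked. Your proposed contradiction argument does not rescue this: a retraction $\rho$ of the abstract set $Z_k$ carries no information about $X_k$, because $Z_k$ is not the image of $X_k$ under any ambient map -- it is a paste-up of a piece of $X_k$ with a piece of the plane. Any Kirszbraun extension of $\rho$ to $\R^n$ is uncontrolled on the annular portion $X_k\cap A(p,\e,r_i)$, which is exactly the part $Z_k$ discards, so you cannot conclude that $g(X_k)$ has small $\F^m$-energy in the ball, and no contradiction with the lower density bound is obtained.

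The paper sidesteps this by making the competitor a genuine deformation of $X_k$. It constructs an ambient Lipschitz map $\phi_j$ that orthogonally projects the thin slab $A(p,\e/3,(1-\e/3)r_j)\cap\cN(E,\e r_j/2)$ onto $E$ and fixes the rest, then takes $\phi_j(X_k)(p,(1-\e/2)r_j)$ as the competitor. Because $X_k$ Hausdorff-converges to $X_0$ and $X_0$ lies in a narrow cone around $E$ (Corollary \ref{cor:approx}), the deformed set contains $E\cap\p B(p,(1-\e/2)r_j)$ and is connected to the interior part of $X_k$. Crucially, a retraction of $\phi_j(X_k)(p,(1-\e/2)r_j)$ onto $E\cap\p B$ now \emph{does} extend (via Kirszbraun) to an ambient Lipschitz map, and composing with $\phi_j$ and $\pi$ produces a new spanning set whose $\F^m$-energy undercuts $\frak{m}$, contradicting minimality. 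That argument hinges on the competitor being obtained from $X_k$ by an ambient deformation. To repair your proof you would need to replace your $Z_k$ by a competitor of this type -- i.e., push the annular part of $X_k$ onto $E$ by a Lipschitz map of $C$ rather than simply deleting it and gluing in the plane.
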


	The idea of the proof is to form a Vitali-type covering of \( X_0 \cap V \) by disjoint balls \( B(p_i,r_i) \) where \( X_0 \) has an approximate tangent plane \( E_i \) at \( p_i \). The ellipticity condition on \( f \) will provide a lower bound for \( \F^m(X_k(p_i,r_i)) \) in terms of \( f(p_i,E_i) \a^m r_i^m \) and a small error. Lusin's theorem and rectifiability of \( \mu_0 \) will provide an upper bound for \( F^m(X_0(p_i,r_i)) \), again in terms of \( f(p_i,E_i) \a^m r_i^m \) and a small error.

	\begin{proof}
		Since \( \F^m(X_k\setminus A)<\i \), \( k\geq 0 \), it suffices to prove the claim for \( V \) disjoint from a neighborhood of \( A \). Let \( X_0' \) be the full \( \H^m \) measure subset of \( X_0\cap V \) consisting of those points \( p \) such that \( \Theta^m(X_0, p)=1 \) and for which \( X_0 \) has an approximate tangent \( m \)-plane \( E_p \) at \( p \).
		
		Fix \( 0<\e<1/2 \). By Lusin's theorem there exists an \( \H^m \) measurable \( Z \subset X_0' \) for which the function \( q \mapsto f(q,T_q X_0) \) is continuous and 
		\begin{equation}
			\label{eq:lsc111}
			\F^m(X_0' \setminus Z)\le \e.
		\end{equation}
		Let \( p \in Z \) and write \( E\equiv p + E_p \). By Lemmas \ref{lem:approx} and \eqref{eq:lowerbound} there exists \( 0 < d_p' < \min\{d_p, d_H(p,V^c)\} \) such that if \( 0 < r < d_p' \), then
		\begin{equation}
			\label{Neighborhood}
			X_0(p,r)\subset \cal{N}(E, \e r/2)
		\end{equation}
		and
		\begin{equation}
			\label{eq:lscdensitybd}
			(1-\e) \a_m r^m\leq \H^m(X_0(p,r))\leq (1+\e) \a_m r^m.
		\end{equation}

		By Lemma \ref{lem:newhalfradius} there exist sequences \( \{N_j\}_{j\in \N} \subset \N \) and \( \{r_j\}_{j\in \N}\subset D_p\cap (0,d_p') \) with \( r_j \to 0 \) such that
		\begin{equation}
			\label{eq:annulusestimate}
			\H^m(X_k \cap A(p,\e,r_j)) \le C_V\e r_j^m,
		\end{equation}
		for all \( k \ge N_j \), where \( C_V\equiv \sup_{p\in X_0\cap V} \{ C_{m,p}\}<\i \).

		\begin{description}
			\item[An upper bound for \( \F^m(X_0(p,r_j)) \)]

				By \eqref{eq:lscdensitybd}, for large enough \( j \), we have

				\begin{align*}
					\F^m(Z(p,r_j)) &= \int_{Z(p,r_j)} f(q,T_q X_0) d\H^m\\
					&\le (f(p,E_p) + \e)(\H^m(X_0(p,r_j))\\
					&\le (f(p,E_p) + \e)(\a_m r_j^m + \e r_j^m)\\
					&\le f(p,E_p)\a_m r_j^m + K \e r_j^m,
				\end{align*}
				where \( K<\i \) is independent of \( p \), \( j \) and \( \e \). Thus,
				\begin{equation}
					\label{eq:x0E}
					\F^m(X_0(p,r_j)) \leq f(p,E_p)\a_m r_j^m + K \e r_j^m + \F^m((X_0'\setminus Z)(p,r_j)).
				\end{equation}
			
			\item[A lower bound for \( \F^m(X_k(p,r_j)) \)]

				Fix \( j\in \N \). Since \( X_k\to X_0 \) in the Hausdorff metric, we may increase \( N_j \) if necessary so that
				\begin{equation}
					\label{eq:annulusestimate2}
					X_k(p,r_j) \subset \cal{N}(E,\e r_j/2)
				\end{equation}
				for all \( k\ge N_j \).

				For each \( j\in \N \) there exists a Lipschitz map \( \phi_j:\R^n\to \R^n \) such that
				\begin{enumerate}
					\item \( \phi_j \) is the identity outside \( A_j\equiv A(p,\e,r_j)\cap \cal{N}(E, \e r_j) \);
					\item \( \phi_j(A_j)=A_j \);
					\item On \( A(p,\e/3,(1-\e/3)r_j)\cap \cal{N}(E, \e r_j/2)) \), the map \( \phi_j \) is orthogonal projection onto \( E \);
					\item \( \phi_j \) is \( C^0 \)-close to a diffeomorphism;
					\item The Lipschitz constant of \( \phi_j \) depends only on \( n \).
				\end{enumerate}

				By Corollary \ref{cor:densitybound} and \eqref{eq:annulusestimate} it holds that for small enough \( \e>0 \), we may increase the constant \( N_j \) so that if \( k \ge N_j \), then \( \phi_j(X_k) \) contains \( E\cap \p B(p, (1-\e/2)r_j) \). Indeed, if \( E\cap \p B(p, (1-\e/2)r_j) \) contains a point \( q \) which is not in \( \phi_j(X_k) \), then we may orthogonally project \( \phi_j(X_k(p,(1-\e/2)r_j)) \) onto \( E \), and then radially project the resulting set away from \( q \) onto \( E\cap \p B(p, (1-\e/2)r_j) \). For \( \e>0 \) small enough, the image of \( \phi_j(X_k) \) by this map will be contained in the neighborhood \( U \) of \( C \), and we may apply the Lipschitz retraction \( \pi:U \to C \) to create a new sequence \( \{Y_k\} \subset \Span(C,A) \). The sets \( Y_k \) satisfy \( Y_k=X_k\setminus B(p,r_j)\cup Z_k \), where \( \H^m(Z_k)\leq \kappa^m C_V \e r_j^m \), and \( \kappa<\i \) depends only on the Lipschitz constants of \( \phi_j \) and \( \pi \). Using the density bounds in Corollary \ref{cor:densitybound}, we conclude that \( \inf_k \{\F^m(Y_k)\}<\frak{m} \) yielding a contradiction (c.f. the proof of Theorem \ref{thm:rectifiable} after \eqref{eq:rect7}.)

				Moreover, there can be no retraction from \( \phi_j(X_k)(p, (1-\e/2)r_j) \) onto \( E\cap \p B(p, (1-\e/2)r_j) \), for if there exists such a retraction \( \rho \), then by the Stone-Weierstass theorem for locally compact spaces, it can be assumed without loss of generality to be Lipschitz. As a Lipschitz map, \( \rho \) can then be extended by the identity to \( \p B(p, (1-\e/2)r_j) \), then to the rest of \( B(p, (1-\e/2)r_j) \) using the Kirszbraun theorem, and then finally by the identity to all of \( \R^n \). The sequence \( \{\pi(\rho(X_k))\}\subset \Span(C,A) \) will yield a contradiction for the same reason as above.

				Thus, by the ellipticity of \( f \) and \eqref{eq:annulusestimate}, it holds that for large enough \( j \) and \( k\ge N_j \),
				\begin{align*}
					(f(p,E_p)-\e )\a_m (1-\e/2)^m r_j^m &\leq \F_{\tilde{f}}^m(\phi_j(X_k)(p, (1-\e/2)r_j))\\
					&\leq \F^m(X_k(p, r_j)) + b' \lip(\phi_j)C_V\e r_j^m,
				\end{align*}

				or in other words,
				\begin{equation}
					\label{eq:lsclowerbound}
					f(p,E_p)\a_m r_j^m \leq \F^m(X_k(p, r_j)) + K \e r_j^m,
				\end{equation}
				where \( K<\i \) is independent of \( p \), \( j \) and \( \e \).

			\item[A Covering]
				
				By \cite{mattila} Theorem 2.8, \eqref{eq:x0E} and \eqref{eq:lsclowerbound}, there exists a covering \( \{B(p_i,s_i)\}_{i\in I} \) of \( \H^m \) almost all \( Z \) by disjoint closed balls \( B(p_i,s_i) \) with \( p_i \in Z \) and \( 0<s_i< d_p' \) small enough so that
				\begin{equation}
					\label{eq:x0E2}
					\F^m(X_0(p_i,s_i)) \le f(p_i,E_{p_i})\a_m s_i^m + K \e s_i^m + \F^m((X_0'\setminus Z)(p_i,s_i))
				\end{equation}
				and
				\begin{equation}
					\label{eq:lsclowerbound2}
					f(p_i,E_{p_i})\a_m s_i^m \leq \F^m(X_k(p_i, s_i)) + K \e s_i^m,
				\end{equation}
				for \( k \) large enough (depending on \( i \).)
				
				Choose a finite subcover \( \{B(p_i,s_i)\}_{i=1}^N \) such that
				\begin{align}
					\label{coveringbyballs}
					\F^m(Z \setminus \cup_{i=1}^N Z(p_i,s_i)) < \e.
				\end{align}
				
				Associated to each \( i=1,\dots,n \) is the constant \( N_i \) from \eqref{eq:annulusestimate}. Let \( N = \max_i \{N_i\} \). By \eqref{coveringbyballs}, \eqref{eq:x0E2}, \eqref{eq:lscdensitybd}, \eqref{eq:lsclowerbound2}, and \eqref{eq:lsc111},
				\begin{align*}
					\F^m(X_0 \cap V) &\leq 2\e + \sum_{i=1}^N \F^m(X_0(p_i,s_i))\\
					&\leq 2\e + \sum_{i=1}^N f(p_i, E_{p_i})\a_m s_i^m + K \e s_i^m + \F^m((X_0'\setminus Z)(p_i,s_i))\\
					&\leq 3\e + \sum_{i=1}^N \F^m(X_k(p_i, s_i)) + 2K \e s_i^m\\
					&\leq 3\e + 4K \e \H^m(X_0\cap V)/\a_m + \F^m(X_k \cap V),
				\end{align*}
				for all \( k \ge N \). Therefore, \[ \F^m(X_0 \cap V) \le \liminf_{k \to \i} \F^m(X_k\cap V) + C \e, \] where \( C<\i \) is independent of \( \e \). Since this holds for all \( \e > 0 \), the result follows.
		\end{description}
	\end{proof}

	In particular, \( \F^m(X_0\setminus A) = \frak{m} \). This completes the proof Theorem \ref{thm:main}.

		 	\addcontentsline{toc}{section}{References} 
		 	\bibliography{bibliography.bib, mybib.bib}{}
		 	\bibliographystyle{amsalpha}
			\end{document}